\theoremstyle{plain}
\newtheorem*{lemma*}{Lemma}
\newtheorem{lemma}{Lemma}
\newtheorem*{theorem*}{Theorem}
\newtheorem{theorem}{Theorem}
\newtheorem*{proposition*}{Proposition}
\newtheorem{proposition}{Proposition}
\newtheorem*{corollary*}{Corollary}
\newtheorem*{claim*}{Claim}
\newtheorem*{conjecture*}{Conjecture}
\newtheorem*{question*}{Question}
\theoremstyle{definition}
\newtheorem*{definition*}{Definition}
\newtheorem*{example*}{Example}
\newtheorem*{algorithm*}{Algorithm}
\newtheorem*{remark*}{Remark}
\newtheorem*{remarks*}{Remarks}
\newtheorem{remark}{Remark}
\newtheorem*{convention*}{Convention}
\numberwithin{equation}{section}
\def\al{\alpha}
\def\be{\beta}
\def\ep{\epsilon}
\def\et{\eta}
\def\la{\lambda}
\def\rh{\rho}
\def\si{\sigma}
\def\ta{\tau}
\def\vh{\varphi}
\def\om{\omega}
\def\Ga{\Gamma}
\def\La{\Lambda}
\def\Om{\Omega}
\def\C{\mathbb{C}}
\def\N{\mathbb{N}}
\def\R{\mathbb{R}}
\def\cA{\mathcal{A}}
\def\cB{\mathcal{B}}
\def\cE{\mathcal{E}}
\def\cF{\mathcal{F}}
\def\cH{\mathcal{H}}
\def\fM{\mathfrak{M}}
\def\fW{\mathfrak{W}}
\def\id{\on{id}}
\def\EM{\cE^{[M]}}
\def\EbM{\cE^{(M)}}
\def\ErM{\cE^{\{M\}}}
\def\Eom{\cE^{[\om]}}
\def\Ebom{\cE^{(\om)}}
\def\Erom{\cE^{\{\om\}}}
\def\EfM{\cE^{[\fM]}}
\def\EbfM{\cE^{(\fM)}}
\def\ErfM{\cE^{\{\fM\}}}
\def\<{\langle}
\def\>{\rangle}
\renewcommand{\o}{\circ}
\let\on=\operatorname
\newcommand{\sr}[1]%
{\ifmmode{}^\dagger\else${}^\dagger$\fi\ifvmode
\vbox to 0pt{\vss
 \hbox to 0pt{\hskip\hsize\hskip1em
 \vbox{\hsize3cm\raggedright\pretolerance10000
 \noindent #1\hfill}\hss}\vss}\else
 \vadjust{\vbox to0pt{\vss%
 \hbox to 0pt{\hskip\hsize\hskip1em%
 \vbox{\hsize3cm\raggedright\pretolerance10000%
 \noindent #1\hfill}\hss}\vss}}\fi%
}
\providecommand{\mapsfrom}{\kern.2em%
\setbox0=\hbox{$\leftarrow$\kern-.10em\rule[0.26mm]{0.1mm}{1.3mm}}\box0%
\kern.3em}
\title[Stability properties for ultradifferentiable classes]
{Equivalence of stability properties for ultradifferentiable function classes}
\author[A.~Rainer]{Armin Rainer}
\address{A.~Rainer: 
Fakult\"at f\"ur Mathematik, Universit\"at Wien, 
Oskar-Morgenstern-Platz~1, A-1090 Wien, Austria}
\email{armin.rainer@univie.ac.at}
\author[G.~Schindl]{Gerhard Schindl}
\address{G.~Schindl: Fakult\"at f\"ur Mathematik, Universit\"at Wien, 
Oskar-Morgenstern-Platz~1, A-1090 Wien, Austria}
\email{a0304518@unet.univie.ac.at}
\begin{document}

\begin{abstract}
     We characterize stability under composition, inversion, and solution of ordinary differential equations  
     for ultradifferentiable classes, and prove that all these stability properties 
     are equivalent. 
\end{abstract}

\thanks{Supported by FWF-Projects P~26735-N25 and P~23028-N13}
\keywords{Ultradifferentiable functions, stability properties}
\subjclass[2010]{26E10, 30D60, 46E10}
\date{\today}

\maketitle

\section{Introduction}

Let $\cF$  
denote some class of smooth mappings between non-empty open subsets of Euclidean spaces (of possibly different dimension).   
We say that
\begin{itemize}
  \item \emph{$\cF$ is stable under composition} if the composite of any two $\cF$-mappings $g : U \to V$ and 
  $f : V \to W$ is an $\cF$-mapping $f \o g : U \to W$.  
  \item \emph{$\cF$ is stable under solving ordinary differential equations (ODEs)} if for any $\cF$-mapping $f : \R \times \R^n \to \R^n$
  the solution of the initial value problem $x' = f(t,x)$, $x(0) = x_0 \in \R^n$ is of class $\cF$ wherever it exists.
  \item \emph{$\cF$ is stable under inversion} if for any $\cF$-mapping $f : \R^m \supseteq U \to V \subseteq \R^n$ 
  so that $f'(x_0) \in L(\R^m,\R^n)$ is invertible at $x_0 \in U$ 
  there exist neighborhoods $x_0 \in U_0 \subseteq U$ and $f(x_0) \in V_0 \subseteq V$ and an 
  $\cF$-mapping $g : V_0 \to U_0$ such that $f \o g = \on{id}_{V_0}$.   
  \item \emph{$\cF$ is inverse closed} if $1/f \in \cF(U)$ for each non-vanishing $f \in \cF(U)$. 
\end{itemize}
In this paper we shall prove that all these stability properties are equivalent for classes of ultradifferentiable 
mappings $\cF$ satisfying some mild regularity conditions. We will treat 
\begin{itemize}
  \item the classical Denjoy--Carleman classes $\EM$ determined by a weight sequence $M=(M_k)$,
  \item the classes $\Eom$ introduced by Braun, Meise, and Taylor \cite{BMT90} determined by a weight function $\om$,
  \item the classes $\EfM$ introduced in \cite{RainerSchindl12} determined by a weight matrix $\fM$.
\end{itemize}
The brackets $[~]$ stand for either $\{~\}$ in the Roumieu case or for $(~)$ in the Beurling case.
For the precise definitions we refer to Section \ref{sec:wm}. 

There are classes $\EM$ that cannot be given in terms of a weight function $\om$ 
and vice versa; see \cite{BMM07}. The classes $\EfM$ comprise all classes $\EM$ and $\Eom$ 
and hence allow for a unified approach to the classes $\EM$ and $\Eom$.
Beyond that, they provide a convenient framework to describe unions and intersections of classical Denjoy--Carleman classes.

The characterization of the aforementioned stability properties for $\EfM$ was important for treating 
$\EfM$-differomorphism groups 
in \cite{Schindl14a}.

\subsection{Stability properties of \texorpdfstring{$\EM$}{EM}}
We assume from now on that any \emph{weight sequence} $M=(M_k)$ is positive, $1= M_0 \le M_1$, and $k \mapsto k! M_k$ is log-convex 
(alias $M$ is weakly log-convex).

\begin{remark} \label{rem:wlc}
  For any weight sequence $M=(M_k)$ the sequence $(k! M_k)^{1/k}$ is increasing and $M_j M_k \le \binom{j+k}{j} M_{j+k}$ 
  for all $j,k \in \N$. 
\end{remark}

\begin{theorem} \label{thm:rM}
  If $\varliminf M_k^{1/k}>0$ and  
  $\sup (\frac{M_{k+1}}{M_k})^{1/k}<\infty$ 
  the following are equivalent:
  \begin{enumerate}
  \item $M_k^{1/k}$ is almost increasing, i.e., $\exists C>0~ \forall j \le k : M_j^{1/j} \le C M_k^{1/k}$. 
  \item $M$ has the (FdB)-property, i.e., 
  $\exists C>0 : M^\o_k \le C^k M_k$, where 
\[
M^\o_k := \max\{M_jM_{\al_1}\dots M_{\al_j}:  \al_i\in \N_{>0}, \al_1+\dots+\al_j = k\}, \quad M^\o_0:=1.
\]  
  \item $\cE^{\{M\}}$ is stable under composition.
  \item $\cE^{\{M\}}$ is stable under solving ODEs.
  \item $\cE^{\{M\}}$ is stable under inversion. 
  \item $\cE^{\{M\}}$ is inverse-closed. 
\end{enumerate}
\end{theorem}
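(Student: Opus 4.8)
The plan is to organize everything around the scheme $(1)\Leftrightarrow(2)$ together with the loop $(2)\Rightarrow(3)\Rightarrow(4)\Rightarrow(5)\Rightarrow(6)\Rightarrow(2)$, so that the two purely combinatorial conditions are tied to each other directly while the four function-theoretic properties are made equivalent by a single cycle that is closed by the reciprocal property (6). The equivalence $(1)\Leftrightarrow(2)$ is the arithmetic core and I would treat it on its own: in the extremal composition defining $M^\o_k$ the inner product $M_{\al_1}\cdots M_{\al_j}$ is controlled by the log-convexity of $k!M_k$ from Remark~\ref{rem:wlc}, whereas the outer, number-of-blocks factor $M_j$ (with $1\le j\le k$) is controlled by almost-increasingness, the lower bound $\varliminf M_k^{1/k}>0$ being what allows one to absorb the resulting fractional power of $M_k$ back into $M_k$. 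Balancing these two contributions uniformly over all compositions is the delicate step, and it is exactly here that (1) is equivalent to the estimate $M^\o_k\le C^kM_k$ of (2).

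The implication $(2)\Rightarrow(3)$ is the Faà di Bruno step: every derivative of a composite is a finite sum of terms $f^{(j)}\cdot\prod_i g^{(\al_i)}$, the Roumieu bounds for $f$ and $g$ bound such a term through the maximum defining $M^\o_k$, and the (FdB)-property then trades $M^\o_k$ for $C^kM_k$. Before the remaining implications I would record one fact that holds \emph{without} (1) and is used repeatedly: for log-convex $k!M_k$ the class $\cE^{\{M\}}$ is always stable under composition with a real-analytic \emph{outer} mapping, since such a mapping contributes only factorials (not $M_j$) to the Faà di Bruno sum, so that Remark~\ref{rem:wlc} alone suffices. Both standing hypotheses feed into what follows: $\varliminf M_k^{1/k}>0$ guarantees that the real-analytic functions belong to $\cE^{\{M\}}$, and $\sup(M_{k+1}/M_k)^{1/k}<\infty$ guarantees that $\cE^{\{M\}}$ is closed under differentiation.

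For $(3)\Rightarrow(4)$ I would write the a priori smooth solution of $x'=f(t,x)$ via $x'(t)=(f\o\Phi)(t)$ with $\Phi(t)=(t,x(t))$, and propagate the quantitative composition estimates underlying (3) through an induction on the order of differentiation to obtain uniform Roumieu bounds on the Taylor coefficients of $x$. For $(4)\Rightarrow(5)$ the local inverse $g$ of a diffeomorphism $f$ is recovered as the endpoint $g(y)=u(1)$ of the parameter-dependent problem $u'(s)=(Df(u(s)))^{-1}(y-f(x_0))$, $u(0)=x_0$; its right-hand side is matrix inversion (real-analytic, hence an admissible outer map by the fact above) composed with $Df\in\cE^{\{M\}}$, and augmenting the state by the equation $y'=0$ lets (4) deliver $g\in\cE^{\{M\}}$. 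Finally $(5)\Rightarrow(6)$: for a non-vanishing $f$ the auxiliary map $\Phi(x,t)=(x,tf(x))$ lies in $\cE^{\{M\}}$, has Jacobian determinant $f\neq0$, and its inverse is $\Phi^{-1}(x,s)=(x,s/f(x))$, so inverting $\Phi$ by (5) and restricting to the slice $s=1$ exhibits $1/f$ as an $\cE^{\{M\}}$-function.

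The genuine obstacle is the closing implication $(6)\Rightarrow(2)$, which I would prove by contraposition: assuming the (FdB)-property fails, I must produce a non-vanishing $f\in\cE^{\{M\}}$ with $1/f\notin\cE^{\{M\}}$. The starting observation is that the derivatives of $1/f$ are given by the Faà di Bruno expansion of $t\mapsto1/t$, so their growth is dictated by the same kind of extremal products of derivatives of $f$ that the quantity $M^\o$ records; the strategy is to build $f$ as a small explicit perturbation of a constant (of lacunary or localized bump type), with derivatives saturating $M$ precisely at the scale where the (FdB)-inequality is violated, and then to read off from this expansion that some derivative of $1/f$ overruns every bound $C^kM_k$. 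Constructing such an $f$ that simultaneously belongs to $\cE^{\{M\}}$ while forcing its reciprocal out of the class is the technical heart of the theorem, and it is where both standing hypotheses are ultimately consumed; combined with the already-established $(1)\Leftrightarrow(2)$, closing this loop yields the full equivalence.
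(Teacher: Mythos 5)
Your scheme founders on the auxiliary ``fact'' you record before treating $(3)\Rightarrow(4)\Rightarrow(5)$: that weak log-convexity alone makes $\cE^{\{M\}}$ stable under composition with a real-analytic \emph{outer} map. This is false, and it is not a repairable slip but precisely the property the theorem asserts is equivalent to (1). Quantitatively, Remark~\ref{rem:wlc} only gives $\prod_i \al_i!\,M_{\al_i}\le k!\,M_k$, hence $\prod_i M_{\al_i}\le \tfrac{k!}{\al_1!\cdots\al_j!}\,M_k$, and the multinomial factor is not $O(C^k)$; taking all blocks equal, $\al_i=m$ and $j=k/m$, the estimate an analytic outer function needs is $M_m^{k/m}\le C^k M_k$, i.e.\ $M_m^{1/m}\le C M_k^{1/k}$, which is almost-increasingness along divisor pairs. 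Qualitatively, applying your ``fact'' to the outer map $t\mapsto 1/t$ would give inverse-closedness (6) for \emph{every} weight sequence satisfying the standing hypotheses; but the Appendix constructs such an $M$ (weakly log-convex, derivation closed, $M_k^{1/k}\to\infty$) that is not almost increasing, and for it $M_{k_j}^{1/k_j}/M_{k_{j+1}}^{1/k_{j+1}}$ blows up along pairs with $k_j\mid k_{j+1}$, so the needed estimate --- and, by Rudin's theorem \cite{Rudin62}, inverse-closedness itself --- fails. Consequently your proof of $(4)\Rightarrow(5)$, which pushes matrix inversion composed with $Df$ through this fact, collapses. The paper avoids this trap structurally: both hard analytic steps (ODEs, Proposition~\ref{prop:ODEr}, and inversion, Proposition~\ref{prop:inverse}) are launched from (1), where $(f')^{-1}$ is handled by Lemma~\ref{lem:inverseclosed}, a Neumann-series argument that genuinely uses \eqref{fM_{rai}}, and the loops are closed through the single implication $(6')\Rightarrow(1)$ of Siddiqi type, with $(4)\Rightarrow(6')$ obtained by the cheap observation that $1/f$ solves $x'=-f'(t)x^2$.

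Two further gaps. First, your $(3)\Rightarrow(4)$ invokes ``quantitative composition estimates underlying (3)'', but those estimates are (2), not (3); if that arrow really consumes (2), then in your diagram (3) only ever appears as a target and the six-fold equivalence is not closed --- you would still need $(3)\Rightarrow(2)$, which is a genuine theorem (proved in \cite{RainerSchindl12} by testing composition on suitable functions), not a formality. Second, your closing step $(6)\Rightarrow(2)$ is only a strategy sketch; the Rudin--Siddiqi--Bruna construction of a non-vanishing $f\in\cE^{\{M\}}$ whose reciprocal leaves the class when (1) fails is the technical heart, and it cannot be waved through, though the route is known to work. On the positive side, your $(5)\Rightarrow(6)$ via $\Phi(x,t)=(x,tf(x))$ agrees with what the paper treats as obvious, the Fa\`a di Bruno step $(2)\Rightarrow(3)$ is standard, and your homotopy ODE $u'(s)=(Df(u(s)))^{-1}(y-f(x_0))$ would be a clean way to deduce stability under inversion from stability under ODEs --- but only \emph{after} one knows $(Df)^{-1}$ stays in the class, which is exactly what must be earned from (1) and cannot be extracted from Remark~\ref{rem:wlc} alone.
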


Note that $\varliminf M_k^{1/k}>0$ iff $C^\om \subseteq \ErM$, and 
$\sup (\frac{M_{k+1}}{M_k})^{1/k}<\infty$ iff $\EM$ is stable under derivation; cf.\ \cite{RainerSchindl12}. 
If we replace the first condition by $\lim M_k^{1/k}=\infty$ which is equivalent to $C^\om \subseteq \EbM$, 
we have the corresponding Beurling type result:

\begin{theorem} \label{thm:bM}
  If $\lim M_k^{1/k}= \infty$ and  
  $\sup (\frac{M_{k+1}}{M_k})^{1/k}<\infty$ 
  the following are equivalent:
  \begin{enumerate}
  \item $M_k^{1/k}$ is almost increasing. 
  \item $M$ has the (FdB)-property.  
  \item $\EbM$ is stable under composition.
  \item $\EbM$ is stable under solving ODEs.
  \item $\EbM$ is stable under inversion.
  \item $\EbM$ is inverse-closed. 
\end{enumerate}
\end{theorem}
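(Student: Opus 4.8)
The plan is to exploit that (1) and (2) are properties of the weight sequence $M$ alone, hence independent of the Roumieu/Beurling dichotomy; their equivalence is supplied verbatim by Theorem \ref{thm:rM}. It then suffices to weave the Beurling stability properties into this equivalence along the chain
\[
(1)\Leftrightarrow(2)\Rightarrow(3)\Rightarrow(4)\Rightarrow(6)\Rightarrow(1),\qquad (4)\Rightarrow(5)\Rightarrow(6),
\]
which renders all six conditions equivalent. The standing hypotheses enter as follows: $\sup(\tfrac{M_{k+1}}{M_k})^{1/k}<\infty$ is exactly stability of $\EbM$ under differentiation, and together with the log-convexity recorded in Remark \ref{rem:wlc} it gives that $\EbM$ is an algebra closed under antidifferentiation; the Beurling hypothesis $\lim M_k^{1/k}=\infty$ amounts to $C^\om\subseteq\EbM$, guaranteeing that we are in the genuine Beurling regime and providing the room needed for the construction in $(6)\Rightarrow(1)$.

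For $(2)\Rightarrow(3)$ I would estimate $(f\o g)^{(k)}$ by the Fa\`a di Bruno formula, whose terms are controlled by $M^\o_k$ times a combinatorial weight and powers of the radii attached to $f$ and $g$; the (FdB)-property $M^\o_k\le C^k M_k$ then produces a bound of the right shape. The Beurling passage consists in \emph{reversing the quantifier on the radius}: fixing the target radius and exploiting that $f,g\in\EbM$ satisfy estimates for \emph{every} source radius, one chooses the latter small enough to absorb the fixed constant $C$ and the combinatorial factor, yielding $f\o g\in\EbM$. For $(3)\Rightarrow(4)$ I would run Picard iteration: since $\EbM$ is stable under composition and antidifferentiation, the iterates $x_{n+1}(t)=x_0+\int_0^t f(s,x_n(s))\,ds$ remain in $\EbM$, and the technical point is to propagate, for each fixed radius, bounds that are uniform in the iteration index, so that the limiting solution again lies in $\EbM$.

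The inverse-type properties follow from stability under ODEs by choosing the right vector field. For $(4)\Rightarrow(6)$ observe that $u:=1/f$ solves the Riccati equation $u'=-f'(t)\,u^2$, whose right-hand side $(t,u)\mapsto -f'(t)u^2$ lies in $\EbM$ because $f'\in\EbM$ and the class is an algebra; stability under ODEs then gives $1/f\in\EbM$. Combining $(4)$ with the inverse-closedness just obtained yields $(5)$: the local inverse $g=f^{-1}$ is recovered as the value at $\tau=1$ of the flow of $\tfrac{d}{d\tau}g=(f'(g))^{-1}v$ along the segment $y=f(x_0)+\tau v$, and inverse-closedness places $z\mapsto(f'(z))^{-1}=\operatorname{adj}(f'(z))/\det f'(z)$ in $\EbM$ near $x_0$, so stability under ODEs delivers $g\in\EbM$. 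Finally $(5)\Rightarrow(6)$ is the graph trick: $(x,t)\mapsto(x,t\,f(x))$ belongs to $\EbM$ with Jacobian determinant $f(x)\ne0$, and the last component $s/f(x)$ of its $\EbM$-inverse exhibits $1/f$ at $s=1$.

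The crux, and the step I expect to be the main obstacle, is $(6)\Rightarrow(1)$. I would argue contrapositively: assuming $M_k^{1/k}$ fails to be almost increasing, I construct a nowhere-vanishing $f\in\EbM$ whose reciprocal escapes $\EbM$. The Beurling quantifier is exactly what makes this delicate—$f$ must satisfy derivative estimates for \emph{all} radii, so the single localized bump that suffices in the Roumieu case is unavailable. Instead one superposes an infinite lacunary-type series whose blocks are matched to a subsequence witnessing the failure of (1), with amplitudes tuned so that $f\in\EbM$ while the oversized intermediate derivatives forced by the broken monotonicity accumulate in $1/f$ and violate the Beurling bound at some fixed radius. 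Verifying simultaneously that $f$ is nowhere zero, that $f\in\EbM$, and that $1/f\notin\EbM$ is the technical heart of the argument; granting it, the chain above closes and all six conditions are equivalent.
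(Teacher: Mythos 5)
Your route differs from the paper's --- the paper deduces Theorem \ref{thm:bM} by applying the weight-matrix Theorem \ref{thm:bfM} to the constant matrix $\fM=\{M\}$, whereas you attempt a direct cycle --- and several of your links are sound: importing $(1)\Leftrightarrow(2)$ from Theorem \ref{thm:rM}, the Riccati trick for $(4)\Rightarrow(6)$ (this is exactly the paper's $(4)\Rightarrow(6')$ step), and the graph trick for $(5)\Rightarrow(6)$. But three links have genuine gaps. The most serious is $(6)\Rightarrow(1)$, which you explicitly leave as a plan: no lacunary construction is carried out, and the Beurling quantifier is precisely why no such direct construction is available --- every candidate $f$ must satisfy estimates for \emph{all} radii while $1/f$ must violate the estimate at \emph{some fixed} radius, and tuning both simultaneously against a failure of almost-increasingness is an open-ended task, not a proof. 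The paper (following Bruna) avoids any construction: inverse closedness makes $\{f:1/f\in\cB\}$ open in the Fr\'echet algebra $\cB$ of bounded $\cE^{(M)}$-functions, {\.Z}elazko's theorem then yields an equivalent submultiplicative seminorm system, and testing the resulting inequality $\|f^m\|^{M}_{K,\rho}\le C D^m q(f)^m$ on the exponentials $f_t(x)=e^{itx}$ produces the almost-increasing property. Without this functional-analytic argument (or an equally substantial substitute) your cycle does not close.

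Second, $(3)\Rightarrow(4)$ by Picard iteration does not work as described: qualitative membership $x_n\in\cE^{(M)}$ carries no uniform constants, and the ``technical point'' you defer --- bounds uniform in the iteration index for each fixed radius --- is the entire content of the implication; it requires the quantitative property (2) and a majorant argument, not merely stability under composition. Moreover, your quantifier-reversal heuristic, which does work for composition (there are two functions, so the \emph{outer} radius can be chosen small relative to the \emph{inner} function's constant, whose own constant enters only as a prefactor), has no analogue for the ODE: there is a single datum $f$, and in the majorant estimates the achievable solution radius is bounded below by roughly $C_\rho\cdot\rho$, where $C_\rho$ is the constant attached to the source radius $\rho$ and blows up as $\rho\to0$, so ``choose $\rho$ small'' does not make the target radius small. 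This is exactly why the paper proves $(1)\Rightarrow(4)$ via Lemma \ref{Komatsu}: from $L\lhd M$ (the actual derivative bounds of $f$) it interpolates sequences $L\le N^1\le N^2\lhd M$ enjoying the almost-increasing property, applies the Roumieu Proposition \ref{prop:ODEr} to the pair $(N^1,N^2)$, and concludes from $N^2\lhd M$; your proposal contains no substitute for this device. Finally, $(4)\Rightarrow(5)$ via the flow $\frac{d}{d\tau}g=(f'(g))^{-1}v$ needs $\cE^{(M)}$-dependence of solutions on the parameter $v$ (equivalently on initial data), but item (4) as defined in the paper asserts regularity in $t$ only; the parameter version is true (Remark \ref{rem:1}) but is not a consequence of the literal statement (4), which is why the paper instead proves $(1)\Rightarrow(5)$ directly (Proposition \ref{prop:inverseB}).
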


Most implications of Theorems \ref{thm:rM} and \ref{thm:bM} are basically known, but scattered in the literature. 

The equivalence of (1) and (6) is due to Rudin \cite{Rudin62} in the Roumieu case and to Bruna \cite{Bruna80/81} in the Beurling case; note that Rudin 
only considered non-quasianalytic classes and H\"ormander dealt with the quasianalytic case (cf.\ \cite[p.~799]{Rudin62}). See also \cite{Siddiqi90}. 

That (1) implies stability under inversion is due to Komatsu \cite{Komatsu79}; different proofs in the Banach space setting were given by 
Yamanaka \cite{Yamanaka89} and Koike \cite{Koike96}.  
The sufficiency of (1) for stability under solving ODEs was obtained by Komatsu \cite{Komatsu80} and in Banach spaces by Yamanaka \cite{Yamanaka91}. 

That the class $\ErM$ is stable under composition, provided that 
$M=(M_k)$ is log-convex (which implies (1)), is due to Roumieu \cite{Roumieu62/63}; other references are e.g.\ \cite{Komatsu73} and \cite{BM04}.
In \cite{RainerSchindl12} we proved the equivalence of (1), (2), and (3) (in both the Beurling and the Roumieu case). 

It is worth mentioning that Dynkin \cite{Dynkin80} gave a characterization of the Roumieu classes $\ErM$ in terms of \emph{almost holomorphic 
extensions}, provided that $M=(M_k)$ is log-convex, which implies the stability properties (3), (4), (5), and (6) in a straightforward manner.   

That all the properties (1) -- (6) are equivalent was, to our knowledge, not observed before.

\subsection{Stability properties of \texorpdfstring{$\Eom$}{Eomega}}
The respective result in the weight function case, that is Theorems \ref{thm:rom} and \ref{thm:bom} below, 
was not known before, apart from a characterizaton for stability under 
composition obtained in \cite{FernandezGalbis06} and in \cite{RainerSchindl12}.

We henceforth assume that any \emph{weight function} $\om$
is a continuous increasing function $\om: [0,\infty) \to [0,\infty)$ with $\om|_{[0,1]}=0$, 
$\lim_{t \to \infty} \om(t) = \infty$, and so that: 
\begin{align} 
  \tag{$\om_1$} \label{om_1} &\om(2t)=O(\om(t)) \text{ as } t\to \infty. \\
  \tag{$\om_2$} \label{om_2} &\log(t)=o(\omega(t)) \text{ as } t\to \infty. \\
  \tag{$\om_3$} \label{om_3} &\vh : t \mapsto \om(e^t) \text{ is convex on } [0,\infty).
\end{align}
Note that $C^\om \subseteq \Erom$ iff $\om(t)=O(t)$, and $C^\om \subseteq \Ebom$ iff $\om(t)=o(t)$, as $t\to \infty$. 

\begin{theorem} \label{thm:rom}
  If $\om$ satisfies $\om(t)=O(t)$ as $t\to \infty$ then the following are equivalent:
  \begin{enumerate}
  \item $\om$ satisfies $\exists C>0 ~\exists t_0>0 ~\forall \la\ge 1 ~\forall t\ge t_0 
  : \om(\la t) \le C \la \om(t)$.
  \item There exists a sub-additive weight function $\tilde \om$ so that $\Eom = \cE^{[\tilde \om]}$.
  \item $\cE^{\{\om\}}$ is stable under composition.
  \item $\cE^{\{\om\}}$ is stable under solving ODEs.
  \item $\cE^{\{\om\}}$ is stable under inversion. 
  \item $\cE^{\{\om\}}$ is inverse-closed.
\end{enumerate}
\end{theorem}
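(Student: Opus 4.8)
The plan is to prove the cycle $(1)\Rightarrow(2)\Rightarrow(3)\Rightarrow(4),(5),(6)$ and then to close it by showing that each of $(4)$, $(5)$, $(6)$ forces $(1)$. I would dispose of the classical part first. For $(1)\Rightarrow(2)$ I would pass to $\varphi=\varphi_\om:t\mapsto\om(e^t)$, convex by $(\om_3)$, and observe that condition $(1)$ is precisely the statement that $\om$ is equivalent — in the sense $\om=O(\tilde\om)$ and $\tilde\om=O(\om)$ — to its least concave majorant; a concave weight vanishing near $0$ is subadditive, and equivalent weights define the same class. For $(2)\Rightarrow(3)$ I would use that subadditivity of $\tilde\om$ translates, via the Young conjugate $\varphi^*$, into the inequality on $\exp(\varphi^*)$ that makes the Fa\`a di Bruno estimate close; this is the Braun--Meise--Taylor composition argument and already appears, for composition alone, in \cite{FernandezGalbis06} and \cite{RainerSchindl12}.

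For the sufficiency of $(3)$ for the remaining properties I would argue as in the weight-sequence Theorem \ref{thm:rM}, where stability under composition is the strongest property. Stability under solving ODEs follows from the majorant/fixed-point scheme, in which the Picard iterates are controlled by repeated composition estimates; stability under inversion follows from the implicit function theorem together with the observation that the inverse is the fixed point of a contraction built from $f$ and therefore stays in the class; and inverse-closedness is immediate, since under the standing hypothesis $\om(t)=O(t)$ we have $C^\om\subseteq\Erom$, so $\iota:y\mapsto 1/y$ lies in the class and $1/f=\iota\o f$ is an $\cE^{\{\om\}}$-mapping by $(3)$. I would also record the easy links back to a single property — for instance $(4)\Rightarrow(6)$, since $u=1/f$ is the unique solution of $u'=-f'\,u^2$ with $u(0)=1/f(0)$ — so that it suffices to close the cycle at inverse-closedness.

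The new and genuinely hard part is the necessity direction, say $(6)\Rightarrow(1)$. Here I would work with the weight matrix $\fM_\om=\{W^{(\ell)}\}_{\ell>0}$, $W^{(\ell)}_k=\exp(\tfrac1\ell\varphi^*(\ell k))$, associated with $\om$ in \cite{RainerSchindl12}, for which $\Erom=\cE^{\{\fM_\om\}}=\varinjlim_\ell\cE^{\{W^{(\ell)}\}}$. The first step is a combinatorial lemma translating the failure of $(1)$ into the failure of the corresponding mixed growth condition for the matrix, i.e. into $\exists\ell\ \forall\ell'\ \forall C:\ (W^{(\ell)})^{\o}_k\not\le C^k W^{(\ell')}_k$ for infinitely many $k$; equivalently, the sequences $W^{(\ell)}$ cannot be made to satisfy the almost-increasing/(FdB) condition of Theorem \ref{thm:rM} uniformly across the matrix. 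The second step is to convert such a uniform failure into an explicit counterexample: assuming $\neg(1)$, I would construct a non-vanishing $f\in\Erom$ along the lines of the Rudin--Bruna constructions \cite{Rudin62,Bruna80/81} whose reciprocal $1/f$ has derivatives growing faster than is permitted by any $W^{(\ell')}$, so that $1/f\notin\Erom$ and $(6)$ fails.

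I expect the main obstacle to be exactly this last construction. In the weight-function (Roumieu) case the class is an inductive limit, so the counterexample must defeat every sequence $W^{(\ell')}$ simultaneously; one must therefore turn the quantitative failure of $(1)$ — which supplies, for suitable $t\to\infty$, scales $\la$ with $\om(\la t)\gg\la\om(t)$ — into a single function with matching lacunary structure. Bounding the derivatives of $1/f$ from below while keeping $f$ itself inside some $\cE^{\{W^{(\ell)}\}}$ is the delicate point, and it is where the uniformity over the matrix makes the weight-function case harder than the single-sequence case of Theorem \ref{thm:rM}.
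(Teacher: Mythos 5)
Your route is genuinely different from the paper's: the paper obtains Theorem \ref{thm:rom} as a corollary of the weight-matrix Theorem \ref{thm:rfM}, applied to the matrix $\fW=\{\Om^\rh:\rh>0\}$, $\Om^\rh_k=\tfrac{1}{k!}\exp(\tfrac{1}{\rh}\vh^*(\rh k))$, for which $\cE^{[\om]}=\cE^{[\fW]}$, together with \cite[6.3, 6.5]{RainerSchindl12}; you instead propose a direct proof. Your treatment of $(1)\Leftrightarrow(2)\Leftrightarrow(3)$ and the easy links $(3)\Rightarrow(6)$ and $(4)\Rightarrow(6)$ is sound. However, two of your steps have genuine gaps.

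First, $(3)\Rightarrow(4)$ and $(3)\Rightarrow(5)$ as you sketch them would fail. Stability under composition is a \emph{qualitative} closure property: it tells you that each Picard iterate, respectively each iterate of the contraction producing the local inverse, lies in $\cE^{\{\om\}}$, but it gives no uniform control of the constants, and the class is not closed under the limits involved --- $C^\infty$-limits of polynomials (which lie in every class) already exhaust $C^\infty$, so membership of every iterate says nothing about the limit. What makes the majorant scheme close is a single quantitative bound valid for all iterates simultaneously, and that is exactly what condition (1) (equivalently, the almost-increasing/(FdB) property of the associated sequences) provides, not (3). This is how the paper argues: Propositions \ref{prop:ODEr} and \ref{prop:inverse}, following \cite{Yamanaka91} and \cite{Koike96}, derive (4) and (5) directly from the quantitative condition (1). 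Since your cycle already yields $(3)\Rightarrow(1)$, the repair is to route these two implications through (1) and prove them by majorant estimates; interposing (3) buys nothing. (Note also that to complete the equivalence you need some link out of (5), e.g.\ the easy $(5)\Rightarrow(6)$, which your outline omits.)

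Second, the necessity direction $(6)\Rightarrow(1)$ is left as a strategy, and the step you defer --- constructing a single non-vanishing $f\in\Erom$ whose reciprocal escapes every $\cE^{\{W^{(\ell)}\}}$ simultaneously --- is the entire difficulty; as you acknowledge, nothing in your outline produces it, and the uniformity over the inductive limit is precisely where a naive Rudin--Bruna construction gets stuck. It is worth knowing that the paper does not perform such a construction either: at the matrix level, the implication from inverse-closedness to \eqref{fM_{rai}} is quoted from \cite[4.9]{RainerSchindl12}, which follows the functional-analytic argument of \cite[Thm~3]{Siddiqi90} (and in the Beurling case the algebra argument of \cite{Bruna80/81} via \cite{Zelazko65}, cf.\ Section \ref{sec:proofB}): inverse-closedness of a suitable Fr\'echet algebra forces local m-convexity, and testing the resulting submultiplicative seminorms on the exponentials $e^{itx}$ extracts the growth condition. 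Adopting or adapting that functional-analytic route, rather than an explicit lacunary construction, is the realistic way to close your cycle.
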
 

\begin{theorem} \label{thm:bom}
  If $\om$ satisfies $\om(t)=o(t)$ as $t\to \infty$ then the following are equivalent:
  \begin{enumerate}
  \item $\om$ satisfies $\exists C>0 ~\exists t_0>0 ~\forall \la\ge 1 ~\forall t\ge t_0 
  : \om(\la t) \le C \la \om(t)$.
  \item There exists a sub-additive weight function $\tilde \om$ so that $\Eom = \cE^{[\tilde \om]}$.
  \item $\Ebom$ is stable under composition.
  \item $\Ebom$ is stable under solving ODEs.
  \item $\Ebom$ is stable under inversion. 
  \item $\Ebom$ is inverse-closed.
\end{enumerate}
\end{theorem}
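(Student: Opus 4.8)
The plan is to mirror the architecture of the Roumieu case, Theorem~\ref{thm:rom}, to which this statement is completely parallel; the genuine differences are only that the Beurling seminorms demand estimates for \emph{every} $\rho>0$ rather than for \emph{some} $\rho>0$, and that the standing hypothesis $\om(t)=o(t)$ (in place of $\om(t)=O(t)$) is exactly what guarantees $C^\om\subseteq\Ebom$, so that all real-analytic maps lie in the class. I would first settle the purely function-theoretic equivalence $(1)\Leftrightarrow(2)$, which does not see the Beurling/Roumieu distinction at all: condition~(1) says that $\om$ grows in a scale-invariant, at most linear manner, and this is equivalent to $\om$ being equivalent to a concave, hence sub-additive, weight $\tilde\om$ (its least concave majorant); since equivalent weights yield the same class, $\Eom=\cE^{[\tilde\om]}$. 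One could alternatively pass to the weight matrix $\fM_\om$ associated to $\om$, for which $\Ebom=\EbfM$, and deduce everything from a uniform weight-matrix version of the equivalence; I will instead argue directly, in parallel with Theorem~\ref{thm:rom}.

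Taking the sub-additive representative $\tilde\om$ as the entry point, I would next prove $(2)\Rightarrow(3)$. Sub-additivity is precisely the feature that turns the Fa\`a di Bruno estimate into a composition bound: combining $\tilde\om(s+t)\le\tilde\om(s)+\tilde\om(t)$ with the chain-rule multi-index sum shows that the composite of two $\cE^{(\tilde\om)}$-maps again obeys the defining estimates for every $\rho>0$. This is where the known composition characterization for sub-additive weights, as in \cite{FernandezGalbis06,RainerSchindl12}, is used; the Beurling twist is only that the universally quantified $\rho$ must be carried through the estimate, which sub-additivity makes routine.

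From composition the remaining positive implications follow by arguments that stay inside the class. Since $C^\om\subseteq\Ebom$, the analytic map $t\mapsto 1/t$ belongs to $\Ebom$ near any nonzero value, so $(3)\Rightarrow(6)$ is immediate by composing a non-vanishing $f$ with $1/t$. For $(3)\Rightarrow(5)$ I would construct the local inverse $g$ from the inverse function theorem and control its derivatives through the same Fa\`a di Bruno recursion that composition stability governs, reading off the estimates for $g$ from the equation $f\o g=\on{id}$. For $(3)\Rightarrow(4)$ I would run the Picard iteration for $x'=f(t,x)$ and use that $\Ebom$ is a composition-stable ring to keep every iterate, and hence the limit, in the class. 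The reverse directions funnel back to~(1): for instance $(4)\Rightarrow(6)$ is seen by noting that $1/f$ solves the Riccati-type initial value problem $y'=-f'(t)\,y^2$, whose right-hand side is again in $\Ebom$, and the other stability properties are chained to inverse-closedness in the same spirit.

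The crux, and the step I expect to be the main obstacle, is closing the cycle via $(6)\Rightarrow(1)$, equivalently the contrapositive $\neg(1)\Rightarrow\neg(6)$: if the growth condition fails I must exhibit a non-vanishing function in $\Ebom$ whose reciprocal leaves $\Ebom$. Using scales $\la_n,t_n$ along which $\om(\la t)/(\la\,\om(t))$ is unbounded, I would manufacture a function built from controlled high-frequency oscillations with amplitudes tuned against the $o(t)$ growth. The delicacy is intrinsic to the Beurling setting: the example must satisfy the $\Ebom$-estimates for \emph{all} $\rho>0$ while its reciprocal violates them for \emph{some} $\rho$, so the Roumieu counterexample cannot simply be transcribed and the frequencies and amplitudes need a separate, more careful calibration. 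Once $\neg(1)\Rightarrow\neg(6)$ is in place, the already-established spokes collapse the six conditions into a single equivalence class.
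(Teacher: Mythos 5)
Your route deliberately bypasses the paper's own proof, which is a pure reduction: the paper associates to $\om$ the weight matrix $\fW=\{\Om^\rh:\rh>0\}$, $\Om^\rh_k=\tfrac{1}{k!}\exp(\tfrac1\rh\vh^*(\rh k))$, uses $\Ebom=\EbfM[\fW]$ together with \cite[6.3, 6.5]{RainerSchindl12}, and invokes Theorem \ref{thm:bfM}; nothing else. Some spokes of your direct plan are sound and match known arguments: $(1)\Leftrightarrow(2)$ via the concave/sub-additive regularization, $(2)\Rightarrow(3)$ by the cited composition results, $(3)\Rightarrow(6)$ by composing with $t\mapsto 1/t$ (legitimate since $\om(t)=o(t)$ gives $C^\om\subseteq\Ebom$), and the Riccati observation that $1/f$ solves $y'=-f'(t)y^2$. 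But three load-bearing implications in your cycle are not proofs. First, $(3)\Rightarrow(4)$ by Picard iteration fails as stated: knowing each iterate lies in $\Ebom$ says nothing about the limit, because the class is not closed under $C^\infty$-convergence; one needs bounds on the $\Ebom$-seminorms, uniform in the iteration and valid for \emph{every} $\rh>0$, and producing those is exactly the hard analytic content. The paper instead proves $(1)\Rightarrow(4)$ by a majorant argument (Proposition \ref{prop:ODEb}), and the Beurling case requires an additional device, Lemma \ref{Komatsu}, which manufactures auxiliary sequences $N^1\le N^2\lhd M^\la$ so that the Roumieu-type majorant proof can be repeated; nothing in your plan plays this role. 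Second, $(3)\Rightarrow(5)$ by ``reading off the estimates for $g$ from $f\o g=\on{id}$'' is likewise not an argument: inverting the Fa\`a di Bruno relations is precisely the content of the ultradifferentiable inverse function theorem (Komatsu, Yamanaka, Koike; Proposition \ref{prop:inverseB} and Lemma \ref{lem:inverseclosed} in the paper), and it uses the almost-increasing property of the weights essentially. It is not a formal consequence of composition stability, and within your architecture you have not yet established that (3) implies that property.

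Third, and most serious, is the step you yourself call the crux, $(6)\Rightarrow(1)$: there you offer only the hope of an oscillatory counterexample whose ``frequencies and amplitudes need a separate, more careful calibration.'' This is precisely the step that cannot be obtained by transcribing Rudin's Roumieu construction, and the paper does not attempt any construction at all. Following Bruna, it argues functional-analytically: the bounded functions of the class on $\R$ form a Fr\'echet algebra in which the set of invertible elements is open, hence by \.Zelazko's theorem the algebra is locally m-convex, and feeding the resulting submultiplicative seminorms the exponentials $f_t(x)=e^{itx}$ yields \eqref{fM_(rai)}, i.e.\ condition (1). Without this (or a genuinely worked-out counterexample, which in the Beurling setting is exactly what Bruna's soft argument was invented to avoid), your cycle never closes and the six conditions remain unlinked. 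In short: several spokes are fine, but $(3)\Rightarrow(4)$, $(3)\Rightarrow(5)$, and $(6)\Rightarrow(1)$ are genuine gaps, and the last one is the heart of the theorem.
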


\subsection{Stability properties of \texorpdfstring{$\EfM$}{EfM}}
Theorems \ref{thm:rM}, \ref{thm:bM}, \ref{thm:rom}, and \ref{thm:bom} are corollaries of the corresponding result for 
the classes $\EfM$ defined in terms of weight matrices, namely Theorems \ref{thm:rfM} and \ref{thm:bfM} below.

A \emph{weight matrix} $\fM = \{M^\la \in \R_{>0}^{\N} : \la \in \La\}$ is a family of 
weight sequences $M^\la=(M^\la_k)$ indexed by an ordered subset $\La$ of $\R$ so that  
$\lim_{k} (k! M^\la_k)^\frac{1}{k}=\infty$ for each $\la$, 
and $M^\la\le M^\mu$ if $\la \le \mu$.

We shall again assume that the classes $\EfM$ contain the class of real analytic functions and are stable under 
derivation. Specifically we need the conditions
\begin{align}
  \tag{$\fM_{\cH}$} \label{fM_H} 
  &\forall \la \in \La : \varliminf (M^\la_k)^{\frac{1}{k}}>0 \\
  \tag{$\fM_{(C^\om)}$} \label{fM_(Com)} 
  &\forall \la \in \La : \lim (M^\la_k)^{\frac{1}{k}}=\infty  \\
  \tag{$\fM_{\{\on{dc}\}}$} \label{fM_{dc}} 
  &\forall \la \in \La ~\exists \mu \in \La ~\exists C>0 
  ~\forall k \in \N: M^\la_{k+1}\le C^{k} M^{\mu}_k \\
  \tag{$\fM_{(\on{dc})}$} \label{fM_(dc)} 
  &\forall \la \in \La ~\exists \mu \in \La ~\exists C>0 
  ~\forall k \in \N: M^\mu_{k+1}\le C^{k} M^{\la}_k
\end{align}
We have 
\[
\xymatrix{
  \eqref{fM_(Com)} \ar@{<=>}[r] \ar@{=>}[d] & C^\om(U) \subseteq \EbfM(U) \ar@{=>}[d]\\
  \eqref{fM_H} \ar@{<=>}[r] \ar@{=>}[d] & \cH(\C^n) \subseteq \cE^{(\fM)}(U) \ar@{=>}[d]\\
  (\fM_{\{C^\om\}}) :\Leftrightarrow \exists \la \in \La : \varliminf (M^\la_k)^{\frac{1}{k}}>0 \ar@{<=>}[r] & C^\om(U) \subseteq \ErfM(U)
}
\]
and 
$\cE^{[\fM]}(U)$ is derivation closed iff \thetag{$\fM_{[\on{dc}]}$}; see \cite{RainerSchindl12}. 
The conditions on the weight matrix $\fM$ that characterize the stability properties of $\EfM$ are 
natural generalizations of the condition of being almost increasing and of the (FdB)-property; 
clearly, the Roumieu and the Beurling version fall apart, see Remark~\ref{rem:rai} below:   
\begin{align}
  \tag{$\fM_{\{\on{rai}\}}$} \label{fM_{rai}} 
  &\forall \la \in \La ~\exists \mu \in \La ~\exists C>0 ~\forall j \le k: 
  (M^{\la}_j)^{\frac{1}{j}} \le C (M^{\mu}_k)^{\frac{1}{k}} \\
  \tag{$\fM_{(\on{rai})}$} \label{fM_(rai)}
  &\forall \la \in \La ~\exists \mu \in \La ~\exists C>0 ~\forall j \le k: 
  (M^{\mu}_j)^{\frac{1}{j}} \le C (M^{\la}_k)^{\frac{1}{k}} \\
  \tag{$\fM_{\{\on{FdB}\}}$} \label{fM_{FdB}} 
  &\forall \la \in \La ~\exists \mu \in \La ~\exists C>0 ~\forall k: (M^\la)^\o_k \le C^k M^\mu_k \\
  \tag{$\fM_{(\on{FdB})}$} \label{fM_(FdB)}
  &\forall \la \in \La ~\exists \mu \in \La ~\exists C>0 ~\forall k: (M^\mu)^\o_k \le C^k M^\la_k
\end{align}

\begin{theorem} \label{thm:rfM}
For a weight matrix $\fM$ satisfying \eqref{fM_H} and \eqref{fM_{dc}}
the following are equivalent:
\begin{enumerate}
  \item $\fM$ satisfies \eqref{fM_{rai}}.
  \item $\fM$ satisfies \eqref{fM_{FdB}}.
  \item $\cE^{\{\fM\}}$ is stable under composition.
  \item $\cE^{\{\fM\}}$ is stable under solving ODEs.
  \item $\cE^{\{\fM\}}$ is stable under inversion. 
  \item $\cE^{\{\fM\}}$ is inverse-closed.
\end{enumerate}
\end{theorem}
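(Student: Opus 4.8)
The plan is to run a single chain of implications that passes through every item. Throughout I use that \eqref{fM_H} and \eqref{fM_{dc}} make $\cE^{\{\fM\}}$ an algebra containing the real-analytic germs and closed under differentiation. Concretely I would prove the arithmetic equivalence $(1)\Leftrightarrow(2)$, the sufficiency $(2)\Rightarrow(3),(4),(5),(6)$, the cheap necessity reductions $(4)\Rightarrow(6)$ and $(5)\Rightarrow(6)$, and finally the key necessity $(6)\Rightarrow(1)$. Since $(3)\Rightarrow(6)$ is immediate, this leaves all six properties in one equivalence class.

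The equivalence $(1)\Leftrightarrow(2)$ is purely arithmetic and is the weight-matrix version of the single-sequence statement in \cite{RainerSchindl12}; both directions rest on the log-convexity recorded in Remark~\ref{rem:wlc} together with \eqref{fM_{rai}}, the point being to control the balance between the outer factor $M^\la_j$ and the product $M^\la_{\al_1}\cdots M^\la_{\al_j}$ in $(M^\la)^\o_k$. Granting \eqref{fM_{FdB}}, the sufficiency for composition $(2)\Rightarrow(3)$ is the Fa\`a di Bruno estimate: for $f,g\in\cE^{\{\fM\}}$ the formula writes the $k$-th derivative of $f\o g$ as a finite sum over the partitions of $k$ whose combinatorial coefficients total at most $C^k$, so that $(M^\la)^\o_k\le C^k M^\mu_k$ turns the resulting product estimate into a Roumieu bound for $f\o g$ at the shifted index $\mu$. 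The implication $(3)\Rightarrow(6)$ then costs nothing: $t\mapsto 1/t$ is real-analytic on $\R\setminus\{0\}$, hence in $\cE^{\{\fM\}}$ by \eqref{fM_H}, and $1/f=(1/\,\cdot\,)\o f$.

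For the remaining sufficiency $(2)\Rightarrow(4),(5)$ the existence and smoothness of the solution, resp.\ of the local inverse, are classical; what has to be shown is class-membership. In both cases the $k$-th derivative of the output is a universal polynomial in the derivatives of the data --- obtained by repeatedly differentiating $x'=f(t,x)$ for ODEs, and by Lagrange inversion for the inverse --- and these Bell-type partition sums are governed by exactly the same \eqref{fM_{FdB}} bound, again with a controlled index shift $\la\rightsquigarrow\mu$. (Alternatively one can deduce $(4)$ and $(5)$ from composition $(3)$, following Komatsu \cite{Komatsu80} in the weight-sequence case.)

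The cycle is closed by the necessity direction. The reductions are short: for $(4)\Rightarrow(6)$ the reciprocal $y=1/h$ solves $y'=-(\p_t h)\,y^2$, whose right-hand side lies in $\cE^{\{\fM\}}$ by \eqref{fM_{dc}} (treating the remaining variables as parameters), so ODE-stability forces $1/h\in\cE^{\{\fM\}}$; for $(5)\Rightarrow(6)$ the map $(x,y)\mapsto(x,y\,h(x))$ has invertible derivative where $h\ne 0$ and its inverse $(u,v)\mapsto(u,v/h(u))$ again forces $1/h\in\cE^{\{\fM\}}$. The decisive step is $(6)\Rightarrow(1)$, the weight-matrix refinement of the Rudin--Bruna theorem \cite{Rudin62, Bruna80/81}: for each $\la$ one constructs a non-vanishing $f\in\cE^{\{M^\la\}}$ whose reciprocal, forced into some $\cE^{\{M^\mu\}}$, produces a convolution inequality on the Taylor coefficients that, via Remark~\ref{rem:wlc}, is equivalent to the almost-increasing estimate $(M^\la_j)^{1/j}\le C\,(M^\mu_k)^{1/k}$ of \eqref{fM_{rai}}. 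I expect the main obstacles to be this necessity step --- the coefficient bookkeeping that converts inverse-closedness back into \eqref{fM_{rai}} while correctly producing the index $\mu$ --- together with the uniform index-tracking in the ODE and inversion estimates of $(2)\Rightarrow(4),(5)$; the arithmetic equivalence and the two cheap reductions are comparatively routine.
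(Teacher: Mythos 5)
Your overall architecture is essentially the paper's: the arithmetic equivalence $(1)\Leftrightarrow(2)$, Fa\`a di Bruno for $(2)\Leftrightarrow(3)$, the cheap reductions $(4)\Rightarrow(6)$ and $(5)\Rightarrow(6)$, and a Rudin--Bruna--Siddiqi-type necessity $(6)\Rightarrow(1)$ (the paper routes the last steps through the one-variable statement $(6')$ and cites \cite[4.9]{RainerSchindl12} for the necessity). The problem is the sufficiency $(2)\Rightarrow(4),(5)$, which is the technical heart of the theorem, and there your proposed argument has a genuine gap.

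For the ODE step, ``the $k$-th derivative of the solution is a universal polynomial in the derivatives of $f$, governed by the \eqref{fM_{FdB}} bound'' does not work as a proof, because the estimate is self-referential: unlike composition, where $f$ and $g$ are given data, the derivatives of $x$ reappear on the right-hand side of $x^{(k+1)}=\tfrac{d^k}{dt^k}f(t,x(t))$, so one must induct on $k$ with a fixed Roumieu bound $\|x^{(j)}\|\le D\sigma^j(j-1)!M_{j-1}$. Running Fa\`a di Bruno together with \eqref{fM_{FdB}} in the inductive step, the powers of $\sigma$ cancel exactly ($\prod_i\sigma^{\alpha_i}=\sigma^k$), and one is left needing an inequality of the shape $C\rho\,[H(1+\rho D)]^{\,k-1}M_{k-1}\le\sigma M_k$: a factor growing geometrically in $k$ must be absorbed by a \emph{single} power of $\sigma$. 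This is impossible already for $M\equiv 1$ (the real-analytic class, admissible here) and for Gevrey sequences; no choice of $D,\sigma$ or of polynomial correction weights closes the induction. This is exactly why the paper argues differently: Proposition \ref{prop:ODEr} (following Yamanaka \cite{Yamanaka91}) compares the solution with the explicit solution $Y^\mu_k$ of the model problem $Y'=G^\mu_k(Y-A)$ with $G^\mu_k(s)=A/(1-\et p^\mu_k s)$, where $p^\mu_k$ is manufactured from \eqref{fM_{rai}}; the comparison $\|y^{(j)}(t)\|\le (Y^\mu_k)^{(j)}(0)$ exploits positivity of the universal polynomials and therefore never compounds triangle-inequality losses. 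The same self-reference afflicts your Lagrange-inversion sketch for $(5)$: differentiating $g'=(f'\o g)^{-1}$ recursively loses a geometric factor at each step. The paper's Proposition \ref{prop:inverse} (following Koike \cite{Koike96}) avoids it by estimating the recursion $R_k(x)=(R_{k-1}(x)S_k(x))'$ uniformly in $x$ (not in $y$), via the combinatorial identity for $N(\be_1,\dots,\be_k)$ and the Neumann-series Lemma \ref{lem:inverseclosed} for $x\mapsto (f'(x))^{-1}$, substituting $x=g(y)$ only at the end. Your parenthetical fallback ``deduce $(4)$ and $(5)$ from $(3)$ following Komatsu \cite{Komatsu80}'' does not repair this: ODE-stability is not a formal consequence of composition-stability; Komatsu's proofs are independent majorant/iteration arguments, and adapting them to weight matrices --- correctly tracking the index shift $\la\rightsquigarrow\mu$ and the use of \eqref{fM_{dc}} and Remark \ref{rem:wlc} described in Remark \ref{rem:shift} --- is precisely the work that remains.

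A smaller point: your $(4)\Rightarrow(6)$ treats the variables other than $t$ ``as parameters,'' but item $(4)$ as defined involves no parameters (the paper only records parameter-dependence as a remark). The paper sidesteps this by using the one-variable statement $(6')$ as the hub of both cycles; your argument survives the same weakening, since your $(6)\Rightarrow(1)$ construction only requires one-variable test functions anyway.
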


\begin{theorem} \label{thm:bfM}
For a weight matrix $\fM$ satisfying \eqref{fM_(Com)} and \eqref{fM_(dc)}
the following are equivalent:
\begin{enumerate}
  \item $\fM$ satisfies \eqref{fM_(rai)}.
  \item $\fM$ satisfies \eqref{fM_(FdB)}.
  \item $\EbfM$ is stable under composition.
  \item $\EbfM$ is stable under solving ODEs.
  \item $\EbfM$ is stable under inversion. 
  \item $\EbfM$ is inverse-closed.
\end{enumerate}
\end{theorem}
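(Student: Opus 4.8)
The plan is to prove the six conditions equivalent by a cycle, exploiting that the equivalence of \eqref{fM_(rai)}, \eqref{fM_(FdB)}, and stability under composition (3) is already available for the Beurling class from \cite{RainerSchindl12}. It therefore remains to weave the three analytic properties (4), (5), (6) into this cycle, and I would do so by a funnel: first derive (4), (5), (6) from the growth condition \eqref{fM_(FdB)} (equivalently from (3)), then reduce each of (3), (4), (5) to inverse-closedness (6), and finally prove the single hard implication (6) $\Rightarrow$ (1). Throughout, \eqref{fM_(Com)} supplies the inclusion of all real-analytic maps in $\EbfM$ and \eqref{fM_(dc)} the closure of $\EbfM$ under differentiation; both enter the reductions below. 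As in the Roumieu Theorem \ref{thm:rfM} the argument is structural, but the order of the indices $\la,\mu$ throughout \eqref{fM_(rai)} and \eqref{fM_(FdB)} is reversed (cf. Remark \ref{rem:rai}), and this reversal must be respected at every quantifier.

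For the sufficiency part I would establish (2) $\Rightarrow$ (4), (2) $\Rightarrow$ (5), and (3) $\Rightarrow$ (6). Inverse-closedness is the easiest: since $1/f$ is the composite of $f$ with the real-analytic map $t \mapsto 1/t$, which lies in $\EbfM$ by \eqref{fM_(Com)}, stability under composition (3) gives $1/f \in \EbfM$ for every non-vanishing $f$. Stability under inversion (5) follows from the ultradifferentiable inverse function theorem: the derivatives of the local inverse obey a recursion of Fa\`a di Bruno type, and feeding the $M^\mu$-bounds of $f$ (with $\mu$ chosen as in \eqref{fM_(FdB)}) into this recursion yields the $M^\la$-bounds required for the inverse, which is Komatsu's method \cite{Komatsu79} adapted to the matrix $\fM$. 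Stability under solving ODEs (4) is obtained in the same spirit by differentiating $x' = f(t,x)$ repeatedly and controlling the resulting expressions for the derivatives of the solution through \eqref{fM_(FdB)}, generalizing \cite{Komatsu80}.

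To close the cycle I would funnel (3), (4), (5) into (6) by two soft reductions. If (4) holds and $f \in \EbfM$ is non-vanishing, then $y = 1/f$ solves the scalar initial value problem $y' = -f'(t)\, y^2$; its right-hand side lies in $\EbfM$ because $f' \in \EbfM$ by \eqref{fM_(dc)} and $\EbfM$ is an algebra, so uniqueness of solutions identifies $1/f$ as an $\EbfM$-function and (4) $\Rightarrow$ (6). If (5) holds, then the $\EbfM$-map $\Phi(x,y) = (x, y\, f(x))$ has Jacobian determinant $f(x) \neq 0$, hence a local inverse $\Phi^{-1}(x,z) = (x, z/f(x))$ of class $\EbfM$; restricting to the slice $z = 1$ exhibits $1/f$ as an $\EbfM$-function, and since membership in $\EbfM$ is a local condition this gives $1/f \in \EbfM(U)$, so (5) $\Rightarrow$ (6). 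Together with (3) $\Rightarrow$ (6) above, every stability property implies inverse-closedness.

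The hard part is the necessity implication (6) $\Rightarrow$ (1). Here I would adapt Bruna's counterexample \cite{Bruna80/81} for the scalar Beurling class to the weight matrix: assuming \eqref{fM_(rai)} fails, fix the index $\la$ produced by its negation and construct a non-vanishing $f \in \EbfM$ whose reciprocal breaks the $M^\la$-estimates, so that $1/f \notin \EbfM$. The delicate point is the bookkeeping imposed by the projective-limit nature of the Beurling class: $f$ must satisfy $M^\mu$-bounds with arbitrarily small constant for \emph{every} $\mu$, whereas the failure of \eqref{fM_(rai)} only furnishes a gap along one sequence $j \le k$ tied to $\la$, and matching these two patterns is exactly where the Beurling argument departs from its Roumieu counterpart in Theorem \ref{thm:rfM}. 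Once (6) $\Rightarrow$ (1) is secured, the chain (1) $\Leftrightarrow$ (2) $\Rightarrow$ (3),(4),(5),(6) and (3),(4),(5) $\Rightarrow$ (6) $\Rightarrow$ (1) closes all equivalences.
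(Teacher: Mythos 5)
Your cycle structure and the soft reductions are correct and essentially agree with the paper: $(1)\Leftrightarrow(2)\Leftrightarrow(3)$ is quoted from \cite{RainerSchindl12}, $(4)\Rightarrow(6)$ via the ODE $y'=-f'(t)y^2$ is exactly the paper's argument, and your $(3)\Rightarrow(6)$ (compose with $t\mapsto 1/t$) and $(5)\Rightarrow(6)$ (invert $(x,y)\mapsto(x,yf(x))$) are fine. The two load-bearing implications, however, have genuine gaps. The first is your sufficiency step: you claim $(2)\Rightarrow(4)$ and $(2)\Rightarrow(5)$ follow by ``feeding the $M^\mu$-bounds of $f$ into the Fa\`a di Bruno recursion'', i.e.\ by rerunning the Roumieu argument with the indices swapped. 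This fails in the Beurling case. The conclusions \eqref{eq:xb} and \eqref{eq:ginvB} require, for \emph{every} $\si>0$, a constant $D$ with $\|g^{(k)}\|\le D\si^k(k-1)!M^\la_{k-1}$; but the recursion only produces a bound with \emph{some} geometric rate, of the form $(2AH\et)^k$, where $H$ is the fixed constant from \eqref{fM_(rai)} for the pair $(\la,\mu)$, $\et\ge\rho$, and $A$ dominates both quantities like $\sup\|y\|$ or $\|(f')^{-1}\|$ \emph{and} the constant $C_{\mu,\rho}$ in the hypothesis on $f$ at rate $\rho$. Shrinking $\rho$ blows up $C_{\mu,\rho}$ in an uncontrolled way, so the product cannot be forced below an arbitrary $\si$. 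The paper's remedy is Lemma~\ref{Komatsu} (a variant of \cite[Lemma~6]{Komatsu79b}): from $L\lhd M^\nu\le M^\mu\le M^\la$, where $L_{k-1}=\sup\tfrac{1}{(k-1)!}\|f^{(k)}\|$ records the actual growth of $f$, one builds auxiliary sequences $N^1\le N^2$ with $L\le N^1$, $N^2\lhd M^\la$, and $(N^1_j)^{1/j}\le\sqrt{H}(N^2_k)^{1/k}$ for $j\le k$; Propositions~\ref{prop:ODEr} and \ref{prop:inverse} are then applied to the pair $(N^1,N^2)$, giving a bound with some fixed rate with respect to $N^2$, and the relation $N^2\lhd M^\la$ converts that into a bound with arbitrarily small rate with respect to $M^\la$. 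Without this (or an equivalent device) your sufficiency half does not go through.

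The second gap is the necessity step $(6)\Rightarrow(1)$. What you propose --- negate \eqref{fM_(rai)}, fix the offending index $\la$, and construct a non-vanishing $f\in\EbfM$ with $1/f\notin\EbfM$ --- is not carried out: you yourself flag the matching of the projective-limit bounds (estimates for every $\mu$ and every $\rho$) against the single gap provided by the negation as an unresolved ``delicate point'', and no construction is given, so as written this implication is simply unproven. It also mischaracterizes \cite{Bruna80/81}: Bruna's necessity argument is not a counterexample construction but a soft functional-analytic one, and this is what the paper follows. Concretely, the subalgebra $\cB$ of bounded functions in $\EbfM(\R)$, topologized by the matrix seminorms together with $\|\cdot\|_\infty$, is a Fr\'echet algebra in which inverse-closedness makes the set of invertible elements open; \.Zelazko's theorem \cite{Zelazko65} then yields an equivalent submultiplicative seminorm system, and applying the resulting inequalities $\|f^m\|^{M^\la}_{K,\rh}\le CD^m q(f)^m$ to the exponentials $f_t(x)=e^{itx}$ feeds into the argument of \cite[4.11]{RainerSchindl12} to produce \eqref{fM_(rai)}. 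You need either this argument or an actual construction; the proposal contains neither.
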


\begin{remark} \label{rem:rai}
  The weight matrix $\fM$ that consists of just two non-equivalent sequences 
  $M^1 \le M^2$ satisfying 
  \begin{itemize}
    \item $(M^i_k)^{1/k} \to \infty$ and $\sup_k (\frac{M^i_{k+1}}{M^i_k})^{1/k}< \infty$, $i=1,2$, 
    \item $(M^1_k)^{1/k}$ almost increasing,
    \item $(M^2_k)^{1/k}$ not almost increasing,
  \end{itemize}
  satisfies \eqref{fM_(rai)} but not \eqref{fM_{rai}}. 
  Whereas, if $(M^2_k)^{1/k}$ is almost increasing and $(M^1_k)^{1/k}$ is not, 
  $\fM$ satisfies \eqref{fM_{rai}} but not \eqref{fM_(rai)}.
  We construct such sequences in Appendix~\ref{appendix}.
\end{remark}

We shall prove Theorems \ref{thm:rfM} and \ref{thm:bfM} in Sections \ref{sec:proofR} and \ref{sec:proofB}, 
respectively. In Section \ref{sec:proofMom} we show that Theorems \ref{thm:rM}, \ref{thm:bM}, \ref{thm:rom}, and \ref{thm:bom} 
are corollaries of Theorems \ref{thm:rfM} and \ref{thm:bfM}.

\subsection*{Notation}

The notation $\cE^{[*]}$ for $* \in \{M,\om,\fM\}$ stands for either $\cE^{(*)}$ or $\cE^{\{*\}}$ with the following restriction: 
Statements that involve more than one $\cE^{[*]}$ symbol must not be interpreted by mixing $\cE^{(*)}$ and $\cE^{\{*\}}$.

\section{Ultradifferentiable function classes} \label{sec:wm}

\subsection{Ultradifferentiable functions defined by weight sequences}

Let $M=(M_k)$ be a weight sequence.
For non-empty open $U \subseteq \R^n$, define
\begin{align*}
  \cE^{(M)}(U,\R^m) &:= \Big\{f \in C^\infty(U,\R^m) : \forall K \subseteq U \text{ compact} ~\forall \rh>0 : \|f\|^M_{K,\rh} < \infty \Big\} \\
  \cE^{\{M\}}(U,\R^m) &:= \Big\{f \in C^\infty(U,\R^m) : \forall K \subseteq U \text{ compact} ~\exists \rh>0 : \|f\|^M_{K,\rh} < \infty \Big\} \\
  &\|f\|^M_{K,\rh} := \sup\Big\{\frac{\|f^{(k)}(x)\|_{L^k(\R^n,\R^m)}}{\rh^k k!        M_k}:x\in K,k\in\N\Big\} 
\end{align*}
and endow these spaces with their natural topologies:
\begin{gather*}
  \cE^{(M)}(U,\R^m) = \varprojlim_{K \subseteq U} \varprojlim_{\ell \in \N_{>0}} \cE^M_{\frac{1}{\ell}}(K,\R^m), \quad 
  \cE^{\{M\}}(U,\R^m) = \varprojlim_{K \subseteq U} \varinjlim_{\ell \in \N} \cE^M_{\ell}(K,\R^m) 
  \\
  \text{where }\quad  \cE^M_\rh(K,\R^m) := \{f \in C^\infty(K,\R^m) : \|f\|^M_{K,\rh} < \infty \}
\end{gather*}
We will need the following inclusion relations (cf.\ \cite{RainerSchindl12}):
\begin{align*}
 \cE^{[M]} \subseteq \cE^{[N]} \quad &\Leftrightarrow \quad 
 M \preceq N \quad :\Leftrightarrow \quad  \exists C,\rh > 0 ~\forall k : M_k \le C \rh^k N_k   \\ 
 \cE^{\{M\}} \subseteq \cE^{(N)} \quad &\Leftrightarrow \quad 
 M \lhd N \quad :\Leftrightarrow \quad \forall \rh>0 ~\exists C>0 ~\forall k : M_k \le C \rh^k N_k    
\end{align*}
In particular,  
$C^\om(U) \subseteq \cE^{\{M\}}(U) \Leftrightarrow \cH(\C^n) \subseteq \cE^{(M)}(U) 
  \Leftrightarrow \varliminf M_k^{\frac{1}{k}}>0$ and $C^\om(U) \subseteq \cE^{(M)}(U) \Leftrightarrow \lim M_k^{\frac{1}{k}} = \infty$.

\subsection{Ultradifferentiable functions defined by weight functions} \label{ssec:wf}

Let $\om$ be a weight function (hence satisfying \eqref{om_1}, \eqref{om_2}, and \eqref{om_3}).
The \emph{Young conjugate} of $\vh(t) = \om(e^t)$, given by 
\[
\vh^*(t):=\sup \{st-\vh(s) : s \ge 0\}, \quad t \ge 0,
\] 
is convex, increasing, and satisfies $\vh^*(0)=0$, $\vh^{**}=\vh$,  
and $\lim_{t\to \infty} t/\vh^*(t)=0$. 
Moreover,
the functions $t \mapsto \vh(t)/t$ and $t \mapsto \vh^*(t)/t$ are increasing; see e.g.\ \cite{BMT90}. 
For non-empty open $U \subseteq \R^n$ define
\begin{align*}
  \cE^{(\om)}(U,\R^m) 
  &:= \Big\{f \in C^\infty(U,\R^m) : 
  \forall K \subseteq U \text{ compact} ~\forall \rh > 0 : \|f\|^\om_{K,\rh} < \infty \Big\} \\
  \cE^{\{\om\}}(U,\R^m) 
  &:= \Big\{f \in C^\infty(U,\R^m) : \forall K \subseteq U \text{ compact} ~\exists \rh>0 : \|f\|^\om_{K,\rh} < \infty \Big\} \\
  &\|f\|^\om_{K,\rh} := \sup\Big\{\|f^{(k)}(x)\|_{L^k(\R^n,\R^m)} \exp(- \tfrac{1}{\rh} \vh^*(\rh k)) : x\in K,k\in\N\Big\} 
\end{align*}
and endow these spaces with their natural topologies: 
\begin{gather*}
  \Ebom(U,\R^m) = \varprojlim_{K \subseteq U} \varprojlim_{\ell \in \N_{>0}} \cE^\om_{\frac{1}{\ell}}(K,\R^m), 
  \quad
  \Erom(U,\R^m) = \varprojlim_{K \subseteq U} \varinjlim_{\ell \in \N} \cE^\om_{\ell}(K,\R^m) 
  \\ \text{where }\quad 
  \cE^\om_\rh(K,\R^m) := \{f \in C^\infty(K,\R^m) : \|f\|^\om_{K,\rh} < \infty \}
\end{gather*}
We have 
$C^\om(U) \subseteq \cE^{\{\om\}}(U)$ iff $\om(t)=O(t)$ as $t \to \infty$, and 
$C^\om(U) \subseteq \cE^{(\om)}(U)$ iff $\om(t)=o(t)$ as $t \to \infty$; see e.g.\ \cite{RainerSchindl12}.

\subsection{Ultradifferentiable functions defined by weight matrices} \label{ssec:fM}

Let $\fM$ be a weight matrix, let $U \subseteq \R^n$ be non-empty and open, and let $K \subseteq U$ be compact. 
We define
\begin{gather*}
\cE^{(\fM)}(K,\R^m) := \bigcap_{\la \in \La} \cE^{(M^{\la})}(K,\R^m), \quad 
 \cE^{\{\fM\}}(K,\R^m) := \bigcup_{\la \in \La} \cE^{\{M^{\la}\}}(K,\R^m), \\
\cE^{(\fM)}(U,\R^m) := \bigcap_{\la \in \La} \cE^{(M^{\la})}(U,\R^m), \quad 
 \cE^{\{\fM\}}(U,\R^m) := \bigcap_{K \subseteq U}\bigcup_{\la \in \La} \cE^{\{M^{\la}\}}(K,\R^m),
\end{gather*}
and endow these spaces with their natural topologies:
\begin{align*}
  \cE^{(\fM)}(U,\R^m) := \varprojlim_{\la\in \La} \cE^{(M^{\la})}(U,\R^m), \quad  
  \cE^{\{\fM\}}(U,\R^m) := \varprojlim_{K \subseteq U} \varinjlim_{\la \in \La} \cE^{\{M^{\la}\}}(K,\R^m).  
\end{align*}
It is no loss of generality to assume that the limits are countable.

We have $\cH(\C^n) \subseteq \cE^{(\fM)}(U)$ iff \eqref{fM_H}, 
$C^\om(U) \subseteq \cE^{[\fM]}(U)$ iff \thetag{$\fM_{[C^\om]}$}, and 
$\cE^{[\fM]}(U)$ is derivation closed iff \thetag{$\fM_{[\on{dc}]}$}; see \cite{RainerSchindl12}.

\section{Proof of Theorem \ref{thm:rfM}: the Roumieu case} \label{sec:proofR}

The proof of the equivalence of the items (1)--(6) of Theorem \ref{thm:rfM} has the following structure: 
\begin{equation} \label{eq:diag}
\begin{split}
   \xymatrix@R=.2cm{
    && (4) \ar@{=>}[dl] &&&\\
    & (6') \ar@{=>}[rr] && (1) \ar@{=>}[ul] \ar@{=>}[dl] \ar@{<=>}[r] & (2) \ar@{<=>}[r] & (3) \\
    (6) \ar@{=>}[ur] && (5) \ar@{=>}[ll] & & & 
  }
\end{split} 
\end{equation}
where: 
\begin{enumerate}
    \item[$(6')$] If $f \in \cE^{\{\fM\}}(\R)$ and $f(0) \ne 0$ then $1/f$ is $\ErfM$ on its domain of definition.  
\end{enumerate}
We successively prove:
\begin{itemize}
   \item the equivalences $(1) \Leftrightarrow (2) \Leftrightarrow (3)$ 
   \item the cycle $(1) \Rightarrow (4) \Rightarrow (6') \Rightarrow (1)$
   \item the cycle $(1) \Rightarrow (5) \Rightarrow (6) \Rightarrow (6') \Rightarrow (1)$
 \end{itemize}

\subsection{The equivalences \texorpdfstring{$(1) \Leftrightarrow (2) \Leftrightarrow (3)$}{(1)<=>(2)<=>(3)}}

The following 
lemma implies the equi\-valence of (1) and (2). 
The equivalence of (2) and (3)
was shown in \cite[4.9]{RainerSchindl12}.

\begin{lemma} \label{lem:raiFdB}
For a weight matrix $\fM$ we have the following implications:
\begin{enumerate}
    \item \thetag{\hyperref[fM_(rai)]{$\fM_{[\on{rai}]}$}} and \thetag{\hyperref[fM_(dc)]{$\fM_{[\on{dc}]}$}} imply 
    \thetag{\hyperref[fM_(FdB)]{$\fM_{[\on{FdB}]}$}}.
    \item \thetag{\hyperref[fM_(FdB)]{$\fM_{[\on{FdB}]}$}} and \eqref{fM_H} imply 
    \thetag{\hyperref[fM_(rai)]{$\fM_{[\on{rai}]}$}}
\end{enumerate}  
\end{lemma}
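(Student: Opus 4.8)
The plan is to establish the two implications separately, carrying the bracket $[\,\cdot\,]$ throughout: in the Roumieu reading the auxiliary index $\mu$ supplied by each hypothesis may be taken $\ge\la$ (so that $M^\la\le M^\mu$), in the Beurling reading $\le\la$; the two readings run in parallel, and I only check at the end that the chains of quantifiers assemble into $(\fM_{[\on{rai}]})$ and $(\fM_{[\on{FdB}]})$.

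For (1) fix $\la$ and a maximizing partition, so that $(M^\la)^\o_k=M^\la_j\,M^\la_{\al_1}\cdots M^\la_{\al_j}$ with $\al_i\ge1$, $\sum_i\al_i=k$, hence $1\le j\le k$. Feeding each inner factor into $(\fM_{[\on{rai}]})$ (legitimate since $\al_i\le k$) yields $M^\la_{\al_i}\le C^{\al_i}(M^\mu_k)^{\al_i/k}$, and multiplying gives the clean estimate $\prod_i M^\la_{\al_i}\le C^k M^\mu_k$. It remains to absorb the outer factor $M^\la_j$. The intention is to merge it with the product via the weak-log-convexity inequality $M^\la_a M^\la_b\le\binom{a+b}{a}M^\la_{a+b}$ of Remark~\ref{rem:wlc}, and then to use derivation closedness $(\fM_{[\on{dc}]})$ to compensate for the resulting excess in the index by passing to a third index $\nu$; the quantifier pattern $\forall\la\,\exists\mu\,\exists\nu$ leaves precisely the room to compose these shifts, and the conclusion $(M^\la)^\o_k\le C^k M^\nu_k$ is $(\fM_{[\on{FdB}]})$.

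I expect the outer factor $M^\la_j$ to be the crux. A term-by-term estimate is doomed: in the extremal configuration $j=k$, $\al_1=\dots=\al_k=1$, the inner product equals $(M^\la_1)^k$, so bounding it by $C^kM^\mu_k$ overshoots by a factor comparable to $M^\la_j$ itself, and nothing can be recovered afterwards. Consequently the outer factor and the product must be estimated jointly, and the genuinely matrix-theoretic content is that $(\fM_{[\on{dc}]})$ controls the one-step growth $M^\la_{k+1}\le C^kM^\mu_k$ of the sequences \emph{across} the family; this is what prevents the surplus index carried by $M^\la_j$ from generating an uncontrolled power of $(M^\mu_k)^{1/k}$. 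For a single weakly-log-convex sequence no such hypothesis is needed, which is why $(\fM_{[\on{dc}]})$ enters only in the matrix formulation.

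For (2), the easier direction, I start from $(\fM_{[\on{FdB}]})$, which for a given $\la$ provides $\mu$ and $C$ with $(M^\la)^\o_k\le C^kM^\mu_k$ (with $\la,\mu$ interchanged in the Beurling case), and I take the same $\mu$ for the output $(\fM_{[\on{rai}]})$. The idea is to probe $(M^\la)^\o_k$ with one well-chosen partition: for $1\le j\le k$ write $k=qj+r$ with $0\le r<j$ and use $q$ parts equal to $j$ (plus one part $r$ when $r>0$). The corresponding term is $M^\la_{q}(M^\la_j)^{q}$ when $r=0$, so
\[
(M^\la_j)^{q}\le\frac{(M^\la)^\o_k}{M^\la_{q}}\le\frac{C^kM^\mu_k}{M^\la_{q}}.
\]
Here \eqref{fM_H} is decisive: it gives $\de_\la>0$ with $M^\la_m\ge\de_\la^m$ for all $m$, bounding the nuisance factor $M^\la_{q}$ (and the remainder factor $M^\la_r$ when $r>0$) from below by a purely geometric amount. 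Since $q=\lfloor k/j\rfloor$ differs from $k/j$ by less than one, taking $k$-th roots and absorbing the geometric constants into a new $C'$ turns this into $(M^\la_j)^{1/j}\le C'(M^\mu_k)^{1/k}$, i.e.\ $(\fM_{[\on{rai}]})$. The only care needed is the routine bookkeeping of the remainder $r$ and of the gap between $q$ and $k/j$.
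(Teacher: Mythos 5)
Both implications in your proposal have genuine gaps; neither is closed by the arguments you describe.

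\textbf{Concerning (1).} What you give is a plan, and the mechanism it rests on does not work. After the (correct) estimate $\prod_i M^\la_{\al_i}\le C^k M^\mu_k$, you propose to merge the outer factor $M^\la_j$ by Remark~\ref{rem:wlc} and then to repair the ``excess in the index'' with \eqref{fM_{dc}}. But merging raises an index by $j$, while \eqref{fM_{dc}} lowers an index by exactly \emph{one} per application, each time costing a geometric factor and possibly a move to a new column of $\fM$. To come back down you would have to iterate it $j$ times with $j$ as large as $k$: the constants compound to roughly $C^{k+(k+1)+\dots}\approx C^{jk}=C^{k^2}$, not $C^k$, and the successive columns $\mu_1,\mu_2,\dots$ need not stabilize, so no single $\nu$ closes the estimate; the quantifier pattern $\forall\la\,\exists\mu\,\exists\nu$ accommodates a bounded composition of shifts, not an unbounded chain. (Merging many factors at once is no better: iterating $M_aM_b\le\binom{a+b}{a}M_{a+b}$ produces multinomial coefficients, which are not $O(C^k)$.) A working argument uses \eqref{fM_{dc}} in the opposite, distributed way: apply it \emph{once to each inner factor}, $M^\la_{\al_i}\le C^{\al_i-1}M^{\la'}_{\al_i-1}$; since $\sum_i(\al_i-1)=k-j$, the indices $j,\al_1-1,\dots,\al_j-1$ sum to exactly $k$, and applying \eqref{fM_{rai}} to every one of them (the outer factor toward one column, the shifted inner factors toward another, then passing to the larger of the two columns) gives total exponent $\frac jk+\frac{k-j}k=1$, i.e.\ $(M^\la)^\o_k\le C^kM^\nu_k$ with no excess at all. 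This also shows that your side remark that \thetag{$\fM_{[\on{dc}]}$} is superfluous for a single sequence is unsubstantiated: the shift is where (dc) enters, matrix or not. Note finally that the paper does not reprove this implication; it cites \cite[4.9, 4.11]{RainerSchindl12}.

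\textbf{Concerning (2).} Your idea is the same as the paper's -- test the (FdB)-bound on a block partition and use \eqref{fM_H} to discard the outer factors from below -- but your block partition is the wrong one, and the remainder is not ``routine bookkeeping''; it is exactly where the proof lives. For $r>0$ your inequality gives $(M^\la_j)^{qj}\le C_1^k M^\mu_k$ with $qj=k-r$, hence
\[
(M^\la_j)^{1/j}\le C_1^{k/(k-r)}(M^\mu_k)^{1/(k-r)}
=C_1^{k/(k-r)}\,(M^\mu_k)^{1/k}\,\bigl((M^\mu_k)^{1/k}\bigr)^{r/(k-r)}.
\]
The exponent $r/(k-r)$ can be of order $1$ (take $j$ slightly below $k/2$ and $r=j-1$), and then the last factor is unbounded in $k$ -- already for $M^\mu_k=k!$ it grows like a power of $k$ -- so it cannot be absorbed into a constant $C'$. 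The divisible case $r=0$ is fine; what is missing is the passage from multiples to all indices, and that step requires weak log-convexity, which your argument never invokes. By Remark~\ref{rem:wlc} and $n!\le n^n\le e^n n!$ one has $(M_n)^{1/n}\le\frac{em}{n}(M_m)^{1/m}$ for $n\le m$; so apply your clean case at level $k'=qj$ and then compare $(M^\mu_{k'})^{1/k'}$ with $(M^\mu_k)^{1/k}$, using $k'\le k<2k'$, to close the gap. The paper sidesteps remainders altogether by probing $(M^\mu)^\o_{jk}$ with $j$ blocks of size $k$ -- going \emph{up} to level $jk$ rather than subdividing $k$ -- and then interpolates in the upper index by exactly this Remark~\ref{rem:wlc} argument.
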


\begin{proof}
  \thetag{1} was shown in \cite[4.9, 4.11]{RainerSchindl12}.
  To see \thetag{2} observe that $(M^\mu)^\o \preceq M^\la$ implies 
  $(M^\mu_j)^{\frac{1}{jk}} (M^\mu_k)^{\frac{1}{k}} \le C (M^\la_{jk})^{\frac{1}{jk}}$ for all $j,k$ and some constant $C$.
  By \eqref{fM_H} we may conclude that 
  $(M^\mu_k)^{\frac{1}{k}} \le \tilde C (M^\la_{jk})^{\frac{1}{jk}}$ for all $j,k$ and some $\tilde C$. 
  For $k \le \ell$ choose $j \in \N$ such that $jk \le \ell < (j+1)k$, then by Remark \ref{rem:wlc} and 
  since $n! \le n^n \le e^n n!$,
  \[
    \ell (M^\la_\ell)^{\frac{1}{\ell}} \!\ge\!
    (\ell! M^\la_\ell)^{\frac{1}{\ell}} \!\ge\! ((jk)! M^\la_{jk})^{\frac{1}{jk}} \ge \frac{\tilde C jk}{e}  (M^\mu_{k})^{\frac{1}{k}} 
    \ge \frac{\tilde C (j+1)k}{2e}  (M^\mu_{k})^{\frac{1}{k}} >  \frac{\tilde C\ell}{2e}  (M^\mu_{k})^{\frac{1}{k}}
  \]
  which implies the desired property. 
\end{proof}

\subsection{The cycle \texorpdfstring{$(1) \Rightarrow (4) \Rightarrow (6') \Rightarrow (1)$}{(1)=>(4)=>(6')=>(1)}} \label{ssec:cycle1}

The implication (1) $\Rightarrow$ (4)
follows from the following proposition.
We state and prove this result on ultradifferentiable 
solutions of ODEs (as well as the ultradifferentiable inverse mapping theorem below) 
for mappings between arbitrary Banach spaces, since we used such results in \cite{KMRu} and will 
need them in forthcoming work; cf.\ \cite{Schindl14a}.

\begin{remark} \label{rem:shift}
  Observe the index shift in the estimates of \eqref{eq:f} and \eqref{eq:x}. 
  In order to deduce (1) $\Rightarrow$ (4) from Proposition \ref{prop:ODEr} we 
  use \eqref{fM_{dc}} for the index shift in \eqref{eq:f} and Remark \ref{rem:wlc} 
  for the one in \eqref{eq:x}.
\end{remark}

Henceforth we use the convention $(-1)! M_{-1} := 1$ for any weight sequence $M$.

\begin{proposition}\label{prop:ODEr}
Let $\fM$ be a weight matrix satisfying  
\eqref{fM_{rai}}.
Let $X$ be a Banach space and let $f : W \to X$ be a $C^\infty$-mapping defined in an open 
subset $W \subseteq X \times \R$ and satisfying
\begin{align} \label{eq:f}
\begin{split}
  \exists \la \in \La ~\exists C,\rh \ge1 &~\forall (k,\ell) \in \N^2 ~\forall  (x,t) \in W : 
  \\
  &\|f^{(k,\ell)}(x,t)\|_{L^{k,\ell}(X,\R;X)} \le C \rh^{k+\ell} (k+\ell-1)! M^\la_{k+\ell-1}.
\end{split}
\end{align}
Then the solution $x : I \to X$ of the initial value problem
\begin{equation} \label{eq:IVP}
  x'(t) = f(x(t),t), \quad x(0) = x_0,  
\end{equation}
satisfies
\begin{align} \label{eq:x}
  \exists \mu \in \La ~\exists D,\si \ge1 ~\forall k \in \N ~\forall  t \in I : 
  \|x^{(k)}(t)\| \le D \si^{k} (k-1)! M^\mu_{k-1}.
\end{align} 
\end{proposition}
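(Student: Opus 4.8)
The plan is to estimate the derivatives $x^{(k)}(t)$ inductively by repeatedly differentiating the ODE $x'(t) = f(x(t),t)$ and applying the Fa\`a di Bruno formula. Differentiating $k-1$ times, $x^{(k)}(t)$ is a sum over partitions of terms involving a derivative $f^{(j,\ell)}$ evaluated at $(x(t),t)$, composed with lower-order derivatives $x^{(\al_1)}(t),\dots,x^{(\al_j)}(t)$ of the unknown. The strategy is to feed the hypothesis \eqref{eq:f} on $f$ and an inductive bound on the $x^{(\al_i)}$ into this formula, and to use the stability of the sequence under composition — encoded in \eqref{fM_{FdB}}, which follows from \eqref{fM_{rai}} via Lemma~\ref{lem:raiFdB} — to reassemble the resulting product of factors $M^\la_{\al_i-1}$ back into a single factor $M^\mu_{k-1}$.

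First I would set up the induction carefully. Because of the index shifts built into \eqref{eq:f} and \eqref{eq:x} (noted in Remark~\ref{rem:shift}), I would prove by induction on $k$ the estimate $\|x^{(k)}(t)\| \le D\si^k (k-1)!\,M^\mu_{k-1}$, starting from $\|x'(t)\| = \|f(x(t),t)\| \le C\rh\,M^\la_{-1}$ given by the $(k,\ell)=(0,0)$ case of \eqref{eq:f} with the convention $(-1)!M_{-1}=1$. For the inductive step, differentiating the ODE produces, via Fa\`a di Bruno, a sum whose generic term is a partial derivative $f^{(j,\ell)}(x(t),t)$ times a product $\prod_{i=1}^{j} x^{(\al_i)}(t)$ with $\al_1+\cdots+\al_j + \ell = k$ (the extra $\ell$ counting the direct $t$-derivatives falling on $f$). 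I would bound the $f$-factor by \eqref{eq:f} and each $x^{(\al_i)}$-factor by the inductive hypothesis, so that the $M$-content of the term is $M^\la_{j+\ell-1}\prod_i M^\mu_{\al_i-1}$.

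The crux of the argument is then to absorb this product of $M$-factors into a single $C^k M^\nu_{k-1}$ by the composition (FdB) property; this is the main obstacle. The definition of $(M^\mu)^\o_k$ is tailored exactly to dominate products of the form $M^\mu_j\,M^\mu_{\al_1}\cdots M^\mu_{\al_j}$ with $\al_1+\cdots+\al_j=k$, and \eqref{fM_{FdB}} gives $(M^\mu)^\o_k \le C^k M^\nu_k$ for a larger index $\nu$. I expect the bookkeeping to require first applying \eqref{fM_{dc}} (the derivation-closedness, again via Remark~\ref{rem:shift}) to convert the shifted index $M^\la_{j+\ell-1}$ into an unshifted $M^{\la'}_{j+\ell}$-type quantity and to handle the $\ell$ direct $t$-derivatives, and then invoking the log-convexity inequalities of Remark~\ref{rem:wlc} together with the elementary factorial estimates (the counting of Fa\`a di Bruno terms, bounded by $C^k$, and the rearrangement of the $(\al_i-1)!$ against $(k-1)!$) to match the index-shift conventions in \eqref{eq:x}. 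Since \eqref{fM_{rai}} and \eqref{fM_{dc}} each introduce a step $\la \mapsto \mu$ in the index $\La$, the final $\mu$ in \eqref{eq:x} will be obtained after finitely many such steps, which is harmless because the estimates are at the Roumieu (existential) level.

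Finally, I would verify that the constants $D$ and $\si$ can be chosen uniformly so the induction closes: the geometric factors $\rh^{j+\ell}$, the per-term constant $C$, the $C^k$ from \eqref{fM_{FdB}}, and the combinatorial count all contribute at most a factor growing geometrically in $k$, which is absorbed into $\si^k$, while the length $|I|$ and the bound on $\|x\|$ on $I$ enter only through fixed constants. The resulting estimate is exactly \eqref{eq:x}, giving $x \in \cE^{\{\fM\}}$ on $I$ and hence the implication $(1)\Rightarrow(4)$ in Theorem~\ref{thm:rfM}.
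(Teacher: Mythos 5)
Your plan is not the paper's argument, and it has a genuine gap at exactly the step you identify as the crux: the inductive absorption of constants does not work, and no choice of $D$ and $\si$ can make it work. Concretely, suppose the inductive hypothesis is $\|x^{(\al)}(t)\|\le D\si^{\al}(\al-1)!\,M^{\mu}_{\al-1}$ for all $\al<k$, and look at the single Fa\`a di Bruno term in $x^{(k)}=\big(\tfrac{d}{dt}\big)^{k-1}\big(f(x(t),t)\big)$ with $j=k-1$, $\ell=0$ and all $\al_i=1$, namely $f^{(k-1,0)}(x(t),t)\,\big(x'(t),\dots,x'(t)\big)$. Estimating it by \eqref{eq:f} and by the hypothesis for $\al=1$, the bound your method produces for $\|x^{(k)}\|$ contains a term of size
\begin{equation*}
C\rh^{k-1}(k-2)!\,M^{\la}_{k-2}\,(D\si)^{k-1},
\end{equation*}
while the conclusion of the inductive step allows at most $D\si^{k}(k-1)!\,M^{\mu}_{k-1}$. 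After cancelling $\si^{k-1}$, closing the induction would force
\begin{equation*}
C(\rh D)^{k-1}\,M^{\la}_{k-2}\ \le\ D\,\si\,(k-1)\,M^{\mu}_{k-1}\qquad\text{for all }k,
\end{equation*}
which is impossible: already for the constant weight matrix with $M^\la_k=M^\mu_k=1$ for all $k$ (the real-analytic case, admissible in Proposition~\ref{prop:ODEr}) the left-hand side grows geometrically in $k$ whenever $\rh D>1$, while the right-hand side grows linearly. The structural reason is that the inductive constant $D$ enters the Fa\`a di Bruno estimate once per block of the partition, i.e.\ to the power $j\le k-1$, but is available only to the first power in the conclusion; likewise the factor $C^k$ from \eqref{fM_{FdB}} is incurred afresh at every inductive step, so these losses compound super-geometrically over the induction instead of being ``absorbed into $\si^k$'' --- the budget $\si^k$ is already spent, $\si^{k-1}$ of it, on the lower-order factors. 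In the weight-matrix setting there is a second defect: each application of \eqref{fM_{rai}} or \eqref{fM_{FdB}} raises the index, $\mu\mapsto\mu'$, and you invoke it \emph{inside} the inductive step, hence once for every $k$; this is not ``finitely many such steps'', and the hypothesis at level $\mu$ can never be reproduced at the same level $\mu$.

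The missing idea --- and the way the paper's proof (following Yamanaka) avoids all of this --- is to make the estimates $k$-dependent and of analytic type \emph{before} any induction starts. For each fixed target order $k$, condition \eqref{fM_{rai}} is used exactly once to produce a single number $p^\mu_k$ with $M^\la_{j-1}/j\le(p^\mu_k)^j$ for $2\le j\le k$ (see \eqref{eq:p}), so that up to order $k$ the right-hand side $g$ of the autonomous reduction $y'=g(y)$ obeys geometric bounds $\|g^{(j)}\|\le A\et^jj!(p^\mu_k)^j=(G^\mu_k)^{(j)}(0)$. One then compares $y$ with the explicit solution $Y^\mu_k$ of the scalar majorant problem \eqref{eq:IVP3}: the comparison \eqref{eq:ind} is an induction over $j\le k$ in which \emph{no constant is lost at all}, because the majorant exactly solves its own ODE and the (nonnegative) Fa\`a di Bruno sums on the two sides are matched term by term, rather than re-absorbed through \eqref{fM_{FdB}} at each step. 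The weight sequence reappears only at the very end, in the explicit evaluation $(Y^\mu_k)^{(k)}(0)\le(2A)^k(\et H)^{k-1}(k-1)!\,M^\mu_{k-1}$, which gives \eqref{eq:x}. Unless you replace your per-step absorption by such a $k$-dependent majorant (or an equivalent device that incurs the \eqref{fM_{rai}}/\eqref{fM_{FdB}} loss only once), the induction you propose cannot close.
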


\begin{remark} \label{rem:1}
A more general statement involving parameters $u$ in a further Banach space $Z$ is true:
the solution of the initial value problem
\begin{equation*} 
  x'(t) = f(x(t),t,u), \quad x(0) = x_0,  
\end{equation*}
satisfies an estimate of the kind \eqref{eq:x} in $t$, $u$, and $x_0$,
given that $f$ satisfies an estimate of the kind \eqref{eq:f} in $x$, $t$, and $u$. 
For simplicity we prove only the result stated in the proposition; 
the general result is obtained by making obvious modifications in the proof of \cite{Yamanaka91} the main ideas of 
which we follow here. Different arguments were given in \cite{Komatsu80} and \cite{Dynkin80}.  
\end{remark}

In the proof of the proposition we will use Fa\`a di Bruno's formula for Fr\'echet derivatives of mappings between Banach spaces. 
So let us recall this formula. 
For $k\ge 1$, 
\begin{align} \label{eq:FaaF}
\frac{(f\o g)^{(k)}(x)}{k!} 
= \on{sym}\Big( \sum_{j\ge 1} \!\sum_{\substack{\al\in \N_{>0}^j\\ \sum_{i=1}^j \al_i =k}}\!
\frac{f^{(j)}(g(x))}{j!} \o \Big(\frac{g^{(\al_1)}(x)}{\al_1!}\times\cdots\times\frac{g^{(\al_j)}(x)}{\al_j!}\Big)\Big),
\end{align}
where $\on{sym}$ denotes symmetrization of multilinear mappings.

\begin{proof}
  We may reduce the initial value problem \eqref{eq:IVP} to the problem
  \begin{equation} \label{eq:IVP2}
    y'= g(y), \quad y(0) =y_0,    
  \end{equation} 
  by setting $y =(x,t)$, $y_0=(x_0,0)$, and $g(y) = (f(y),1)$. 
  So we assume that $Y$ is a Banach space, 
  $U$ is a neighborhood of $0$ in $Y$, 
  and $g \in C^\infty(U,Y)$ satisfies
  \begin{align} \label{eq:g2}
     \exists \la \in \La ~\exists C,\rh \ge1 ~\forall k \in \N ~\forall  y \in U : \|g^{(k)}(y)\|_{L^k(Y,Y)} \le C \rh^k (k-1)! M^\la_{k-1}.
  \end{align} 
  Without loss of generality we assume $M^\la_1 \ge 2$.
  By the classical existence and uniqueness result, there exists a unique $C^\infty$ 
  solution $y =y(t)$ of \eqref{eq:IVP2} for $t$ 
  in a neighborhood $I$ of $0$. We assume that $\sup_{t \in I} \|y(t)\|<\infty$.

  By \eqref{fM_{rai}} there exists $\mu \in \La$ and $H\ge 1$ such that for $2 \le j \le k$,
  \begin{align*}
    \Big(\frac{M^\la_{j-1}}{j}\Big)^{\frac{1}{j-1}} \le H \Big(\frac{M^{\mu}_{k-1}}{k}\Big)^{\frac{1}{k-1}} =: p^\mu_k. 
  \end{align*} 
  Then, since $1 \le M^\la_1/2 \le p^\mu_k$, 
  \begin{align} \label{eq:p}
    \frac{M^\la_{j-1}}{j} \le (p^\mu_k)^j, \quad \text{ for } 2 \le j \le k.
  \end{align}
  Let us choose constants $A$ and $\et$ such that
  \begin{equation} \label{eq:Aet}
    A \ge \max\{\sup_{t \in I} \|y(t)\|,C\}  \quad \text{ and } \quad \et \ge \rh,    
  \end{equation}
  where $C$ and $\rh$ are the constants from \eqref{eq:g2}.
  We define
  \begin{align*}
    G^\mu_k(s) &:= \frac{A}{1-\et p^\mu_k  s},
  \end{align*}
  for small $s \in \R$,
  and consider the initial value problem
  \begin{equation} \label{eq:IVP3}
    Y'(t) = G^\mu_k(Y(t)-A), \quad Y(0) = A. 
  \end{equation}
  We claim that the solution 
  \begin{align*}
    Y^\mu_k(t) =A+ \frac{1-\sqrt{1-2A \et p^\mu_k t}}{\et p^\mu_k}
  \end{align*}
   of \eqref{eq:IVP3} satisfies
  \begin{equation} \label{eq:ind}
    \sup_{t \in I} \|y^{(j)}(t)\| \le (Y^\mu_k)^{(j)}(0) \quad \text{ for } j \le k.  
  \end{equation}
  This implies the statement of the proposition, since, for $j \ge 1$,
  \begin{align*}
    (Y^\mu_k)^{(j)}(0) &= (2A)^j (\et p^\mu_k)^{j-1} (2\sqrt{\pi})^{-1} \Ga(j-\tfrac1 2) \\
    & \le (2A)^j (\et H)^{j-1} (j-1)! M^\mu_{j-1} \quad \text{ if } k=j. 
  \end{align*}

  Let us prove \eqref{eq:ind}.
  By the choice of the constant $A$, \eqref{eq:ind} is satisfied for $j=0$.
  Suppose that \eqref{eq:ind} holds for all $j \le \ell< k$. 
  By \eqref{eq:g2}, \eqref{eq:p}, and \eqref{eq:Aet}, we have  
  \begin{align*}
    \sup_{y \in U} \|g^{(j)}(y)\|_{L^j(Y,Y)} 
    &\le C \rh^j (j-1)! M^\la_{j-1}  
    \le C \rh^j j! (p^\mu_k)^j 
    \le A \et^j j! (p^\mu_k)^j = (G^{\mu}_k)^{(j)}(0).
  \end{align*}
  So, by applying Fa\`a di Bruno's formula \eqref{eq:FaaF} twice, we may conclude that, for $j \le \ell < k$,
  \begin{align*} 
    \sup_{t \in I} \|(g \o y)^{(j)}(t)\| &\le j! \sum_{h\ge 1} \frac{(G^{\mu}_k)^{(h)}(0)}{h!} 
    \sum_{\substack{\al_1+\cdots+\al_h=j\\ \al_i>0}} 
    \prod_{i=1}^h  \frac{(Y^\mu_k)^{(\al_i)}(0)}{\al_i!} \\
    &= (G^\mu_k \o (Y^\mu_k-A))^{(j)}(0).
  \end{align*}
  But as $y$ and $Y^\mu_k$ are solutions of \eqref{eq:IVP2} and \eqref{eq:IVP3}, respectively, 
  it follows that \eqref{eq:ind} holds for $j \le \ell+1$.  
  By induction, \eqref{eq:ind} follows.  
\end{proof}

Let us check that (4) implies $(6')$. 
Let $f \in \cE^{\{\fM\}}(\R)$ satisfy $f(0) \ne 0$ and consider $g := 1/f$. 
Then $g$ solves the initial value problem
  \[
    x' = - f'(t) x^2, \quad x(0) = g(0).
  \]
By \eqref{fM_{dc}}, the mapping $(x,t) \mapsto f'(t)x^2$ is $\ErfM$ and so $g$ is $\cE^{\{\fM\}}$, by (4).

The implication $(6')$ $\Rightarrow$ (1) was shown in the proof of 
\cite[4.9(2)$\Rightarrow$(3)]{RainerSchindl12}, by following the argument of 
\cite[Thm~3]{Siddiqi90}.

The next lemma, a variation of \cite[Thm~1]{Koike96}, 
is a preparation for Proposition~\ref{prop:inverse} below. It gives, in particular, 
a direct proof of the implication (1) $\Rightarrow$ $(6')$.

\begin{lemma} \label{lem:inverseclosed} 
  Let $\fM$ be a weight matrix satisfying \eqref{fM_{rai}}. 
  Let $E, F, G$ be Banach spaces, $U \subseteq E$ be open, and let $T \in C^\infty (U,L(F,G))$ satisfy
  \begin{align} \label{eq:1/f}
  \begin{split}
  \exists \la \in \La ~\exists C,\rh \ge1 &~\forall k \in \N ~\forall  x\in U : 
  \|T^{(k)}(x)\|_{L^k(E,L(F,G))} \le C \rh^{k} k! M^\la_{k}.
  \end{split}
  \end{align}
  If $T(x_0) \in L(F,G)$ is invertible,  
  then there is a neighborhood $x_0 \in U_0 \subseteq U$ such that $U_0 \ni x \mapsto S(x) := T(x)^{-1}$  
  satisfies 
  \begin{align} \label{eq:1/g}
  \begin{split}
  \exists \mu \in \La ~\exists D,\si \ge1 &~\forall k \in \N ~\forall  x\in U_0 : 
  \|S^{(k)}(x)\|_{L^k(E,L(G,F))} \le D \si^{k} k! M^\mu_{k}.
  \end{split}
  \end{align}
\end{lemma}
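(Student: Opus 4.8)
The plan is to realize $S(x)=T(x)^{-1}$ as the composite $\on{inv}\o T$, where $\on{inv}\colon A\mapsto A^{-1}$ is the inversion map on the invertible elements of $L(F,G)$, and then to apply Fa\`a di Bruno's formula \eqref{eq:FaaF}; the point is that the derivatives of $\on{inv}$ grow only factorially, which interacts cleanly with \eqref{fM_{rai}}. First I would localize: since the invertibles form an open subset of $L(F,G)$ and $T(x_0)$ is invertible, a Neumann series argument produces an open ball $x_0\in U_0\subseteq U$ (e.g.\ chosen so that $\|T(x)-T(x_0)\|\le \tfrac12\|T(x_0)^{-1}\|^{-1}$ on $U_0$) on which $T(x)$ is invertible and $\|T(x)^{-1}\|\le B$, with $B:=2\|T(x_0)^{-1}\|\ge1$. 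I would then invoke the classical fact that $\on{inv}$ is real analytic on the invertibles with
\[
\on{inv}^{(j)}(A)(H_1,\dots,H_j)=(-1)^j\sum_{\si\in\fS_j} A^{-1}H_{\si(1)}A^{-1}\cdots A^{-1}H_{\si(j)}A^{-1},
\]
whence $\|\on{inv}^{(j)}(A)\|_{L^j}\le j!\,\|A^{-1}\|^{j+1}$.

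Applying \eqref{eq:FaaF} to $S=\on{inv}\o T$ on $U_0$ and taking norms (symmetrization does not increase the norm), then inserting the bound on $\on{inv}^{(j)}$ together with the hypothesis \eqref{eq:1/f} in the form $\|T^{(\al_i)}(x)\|/\al_i!\le C\rh^{\al_i}M^\la_{\al_i}$, I obtain for $k\ge1$
\[
\|S^{(k)}(x)\|\le k!\sum_{j=1}^{k}B^{j+1}C^{j}\rh^{k}\!\!\sum_{\substack{\al\in\N_{>0}^{j}\\ \al_1+\cdots+\al_j=k}}\prod_{i=1}^{j}M^{\la}_{\al_i}.
\]

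The crucial step, and the only place where the Roumieu condition enters, is a uniform bound on the inner product. By \eqref{fM_{rai}} there are $\mu\in\La$ and $C'\ge1$ with $(M^\la_l)^{1/l}\le C'(M^\mu_k)^{1/k}$ for all $l\le k$; since each $\al_i\le k$ and $\sum_i\al_i=k$, raising to the power $\al_i$ and multiplying gives
\[
\prod_{i=1}^{j}M^{\la}_{\al_i}\le\prod_{i=1}^{j}(C')^{\al_i}(M^\mu_k)^{\al_i/k}=(C')^{k}M^{\mu}_{k},
\]
a bound independent of $j$ and of the particular composition. Note that, in contrast to Proposition~\ref{prop:ODEr}, there is no index shift here, so \eqref{fM_{dc}} is not needed; this is exactly the heart of the argument, and verifying the derivative estimate for $\on{inv}$ above is the one genuinely non-routine ingredient.

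Finally, using that the number of compositions of $k$ into $j$ positive parts is $\binom{k-1}{j-1}$ and the binomial theorem $\sum_{m=0}^{k-1}(BC)^m\binom{k-1}{m}=(1+BC)^{k-1}$, the previous two displays combine to
\[
\|S^{(k)}(x)\|\le k!\,\rh^{k}(C')^{k}M^{\mu}_{k}\sum_{j=1}^{k}B^{j+1}C^{j}\binom{k-1}{j-1}\le B^{2}C\,\big(\rh C'(1+BC)\big)^{k}k!\,M^{\mu}_{k},
\]
which is precisely \eqref{eq:1/g} with $D:=B^{2}C\ge1$ and $\si:=\rh C'(1+BC)\ge1$; the case $k=0$ is covered by $\|S(x)\|\le B\le D=D\,M^\mu_0$. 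This yields the lemma.
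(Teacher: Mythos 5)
Your proposal is correct and follows essentially the same route as the paper's proof: both localize via a Neumann series, apply Fa\`a di Bruno's formula \eqref{eq:FaaF} to a composite realizing $S$ whose outer factor is an inversion with $j$-th derivative bounded by $j!\cdot\mathrm{const}^{j+1}$, and conclude with the identical key estimate $M^\la_{\al_1}\cdots M^\la_{\al_j}\le H^k M^\mu_k$ coming from \eqref{fM_{rai}}. The only cosmetic difference is that the paper expands $S(y)=S(x)\bigl(\id-(T(x)-T(y))S(x)\bigr)^{-1}$ around each point and differentiates at $y=x$, which avoids the explicit formula for the derivatives of $\on{inv}$ that you invoke, whereas you carry out the final combinatorial summation (compositions counted by $\binom{k-1}{j-1}$) explicitly where the paper leaves it implicit.
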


\begin{proof}
  There is an open neighborhood $U_0$ of $x_0$ so that for $x \in U_0$ we have $\|S(x)\|_{L(G,F)} \le A$ for some constant
  $A>0$ 
  and $S(x)$ is given by the Neumann series   
  \[
    S(x) = T(x_0)^{-1} \sum_{j=0}^\infty \big((T(x_0)-T(x))T(x_0)^{-1}\big)^j.
  \]
  For $y$ near $x$ we may consider  
  \[
    S(y) = S(x) \sum_{j=0}^\infty \big((T(x)-T(y)) S(x)\big)^j = S(x) \big(\id - (T(x)-T(y)) S(x)\big)^{-1}
  \]
  and use Fa\`a di Bruno's formula \eqref{eq:FaaF} and \eqref{eq:1/f} to obtain, for $y=x$,
  \begin{align*}
    \frac{\|S^{(k)}(x)\|_{L^k(E,L(G,F))}}{k!} 
    &\le A \sum_{j\ge 1} \sum_{\substack{\al_1+\cdots+\al_j=k\\ \al_i>0}} 
    (AC)^j \rh^k \prod_{i=1}^j M^\la_{\al_i} 
  \end{align*}
  which implies \eqref{eq:1/g}, since $M^\la_{\al_1} \cdots M^\la_{\al_j} \le H^{k} M^\mu_{k}$, by \eqref{fM_{rai}}. 
\end{proof}

\subsection{The cycle \texorpdfstring{$(1) \Rightarrow (5) \Rightarrow (6) \Rightarrow (6') \Rightarrow (1)$}{(1)=>(5)=>(6)=>(6')=>(1)}} 
\label{ssec:cycle2}

The implications $(5) \Rightarrow (6) \Rightarrow (6')$ are obvious. 
And we already know from Subsection \ref{ssec:cycle1} that $(1)  \Leftrightarrow (6')$.
That (1) implies (5) follows from the next proposition, using the analogue 
of Remark \ref{rem:shift}.

\begin{proposition}\label{prop:inverse}
Let $\fM$ be a weight matrix satisfying  
\eqref{fM_{rai}}.
Let $f : U \to V$ be a $C^\infty$-mapping between open subsets 
$U \subseteq E$ and $V \subseteq F$ of Banach spaces $E, F$ satisfying 
\begin{align} \label{eq:finv}
\begin{split}
  \exists \la \in \La ~\exists C,\rh \ge1 &~\forall k \in \N ~\forall  x \in U : 
  \|f^{(k)}(x)\|_{L^{k}(E,F)} \le C \rh^{k-1} (k-1)! M^\la_{k-1}
\end{split}
\end{align}
and so that $f'(x_0) \in L(E,F)$ is invertible. 
Then there exist neighborhoods $x_0 \in U_0 \subseteq U$ and $f(x_0) \in V_0 \subseteq V$ and a 
$C^\infty$-mapping $g : V_0 \to U_0$ satisfying  
\begin{align} \label{eq:ginv}
\begin{split}
  \exists \mu \in \La ~\exists D,\si \ge1 &~\forall k \in \N ~\forall  y \in V_0 : 
  \|g^{(k)}(y)\|_{L^{k}(F,E)} \le D \si^{k-1} (k-1)! M^\mu_{k-1}
\end{split}
\end{align}
and such that $f \o g = \on{id}_{V_0}$.
\end{proposition}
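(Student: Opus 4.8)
The plan is to prove the ultradifferentiable inverse function theorem by reducing it to the already-established inverse-closedness result for the derivative, namely Lemma~\ref{lem:inverseclosed}, combined with a Fa\`a di Bruno bookkeeping argument. First I would apply the classical ($C^\infty$) inverse function theorem to obtain neighborhoods $U_0 \ni x_0$ and $V_0 \ni f(x_0)$ and a smooth local inverse $g : V_0 \to U_0$ with $f \o g = \on{id}_{V_0}$; the entire content of the proposition is then the Roumieu-type estimate \eqref{eq:ginv} for the derivatives of $g$. The key structural identity is the formula for the derivative of an inverse: differentiating $f \o g = \on{id}$ once gives $g'(y) = (f'(g(y)))^{-1}$, and I would aim to control the higher derivatives of $g$ through this relation.

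The crucial preparatory step is to understand $T(y) := f'(g(y))^{-1}$. Since $f$ satisfies the hypothesis \eqref{eq:finv} with an index shift by $1$, the map $x \mapsto f'(x)$ satisfies an estimate of the kind \eqref{eq:1/f} (with $k! M^\la_k$ in place of $(k-1)! M^\la_{k-1}$, precisely the shift absorbed by the analogue of Remark~\ref{rem:shift}), so after shrinking $U_0$ so that $f'(x)$ stays invertible, Lemma~\ref{lem:inverseclosed} yields that $x \mapsto f'(x)^{-1}$ satisfies a bound of the form \eqref{eq:1/g} with a new index $\mu$. I would then set up an induction on $k$ using the relation
\begin{align*}
  g^{(k)}(y) = \big(f'(g(y))^{-1}\big) \o \big(\text{lower-order terms in } g',\dots,g^{(k-1)} \text{ and } f^{(2)},\dots,f^{(k)}\big),
\end{align*}
obtained by differentiating $f'(g(y)) \o g'(y) = \on{id}$ repeatedly (equivalently, applying Fa\`a di Bruno \eqref{eq:FaaF} to $f' \o g$). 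Each term involves a product of derivatives of $g$ of orders $\al_1,\dots,\al_j$ summing to $k-1$, contracted against $f^{(j+1)}(g(y))$ and then composed with the inverse factor $f'(g(y))^{-1}$.

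The main obstacle, and the heart of the argument, will be closing the induction with the correct combinatorial constant: I need to show that the sum over all Fa\`a di Bruno partitions of the products $M^\mu_{\al_1-1} \cdots M^\mu_{\al_j-1}$ (times the factor $M^\nu_{j}$ coming from $f^{(j+1)}$) is dominated by $H^{k} M^\vartheta_{k-1}$ for some further index $\vartheta \in \La$ and constant $H$. This is exactly where the $(\on{FdB})$-type behaviour is needed, and here I would invoke that \eqref{fM_{rai}} implies \eqref{fM_{FdB}} via Lemma~\ref{lem:raiFdB}(1) (using \eqref{fM_{dc}}), so that the multiplicative convolution $(M^\la)^\o$ is controlled. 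The bookkeeping must track three things simultaneously: the geometric factor $\si^{k-1}$ (which requires choosing $\si$ large relative to $\rh$ and the Neumann-series constant $A$ from Lemma~\ref{lem:inverseclosed}), the sup-bound $D$ on $\|g(y)\|$ and on the inverse factor, and the index chain $\la \to \mu \to \vartheta$ produced by successive applications of \eqref{fM_{rai}}/\eqref{fM_{dc}}; because $\La$ is only required to absorb finitely many index-raisings, this poses no problem. I expect the verification that the shift conventions (the $(-1)! M_{-1} := 1$ convention and the $(k-1)!$ versus $k!$ discrepancy between \eqref{eq:finv} and \eqref{eq:1/f}) match up cleanly to be the most error-prone but ultimately routine part.
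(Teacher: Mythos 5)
Your opening moves coincide with the paper's: invoke the classical $C^\infty$ inverse function theorem, then apply Lemma \ref{lem:inverseclosed} to $T=f'$ (the index shift between \eqref{eq:finv} and \eqref{eq:1/f} is indeed harmless). The gap is in the core step, the induction on the derivative order $k$. Your inductive hypothesis is a bound $\|g^{(\al)}(y)\|\le D\si^{\al-1}(\al-1)!\,M^{\vartheta}_{\al-1}$ for orders $\al<k$ with some fixed index $\vartheta\in\La$, and your inductive step must dominate the mixed Fa\`a di Bruno products $M^{\nu}_{j}\,M^{\vartheta}_{\al_1-1}\cdots M^{\vartheta}_{\al_j-1}$ (where $\nu$ indexes the bounds on $f$) by $H^{k}M^{\vartheta}_{k-1}$ \emph{with the same index $\vartheta$}, since otherwise the hypothesis is not available at the next order. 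But \eqref{fM_{rai}} and \eqref{fM_{FdB}} never return the same index: each application replaces the input index by a new one. So either the index escalates once per induction step (and then there is no single $\mu$ and no uniform constant in \eqref{eq:ginv}), or you need one sequence $M^{\vartheta}$ in the matrix satisfying the product estimate relative to \emph{itself}, i.e.\ essentially that $(M^\vartheta_k)^{1/k}$ is almost increasing. A weight matrix satisfying \eqref{fM_{rai}} need not contain any such sequence---allowing that no individual $M^\la$ is almost increasing is exactly the purpose of the matrix conditions (cf.\ Remark \ref{rem:rai} and Appendix \ref{appendix}). Hence your claim that ``$\La$ is only required to absorb finitely many index-raisings'' is precisely the point that fails: your scheme needs one raising per derivative order. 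A secondary problem: you invoke Lemma \ref{lem:raiFdB}(1) to get \eqref{fM_{FdB}}, but that implication requires \eqref{fM_{dc}}, which is not among the hypotheses of Proposition \ref{prop:inverse} (only \eqref{fM_{rai}} is assumed).

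The paper's proof is built to avoid exactly this difficulty: following Koike, it sets $S_k:=S:=(f')^{-1}$, $R_0:=\id_E$, $R_k:=(R_{k-1}S_k)'$ (all derivatives taken in $x$), and uses the exact identity $g^{(n)}(y)=R_{n-1}(x)S(x)$ for $x=g(y)$, whose right-hand side contains \emph{no} derivatives of $g$ at all. Consequently there is no induction on weight-sequence bounds: the combinatorial inequality \eqref{eq:RS} reduces everything to products $M^\mu_{\be_1}\cdots M^\mu_{\be_k}$ with $\be_1+\cdots+\be_k=k$ and a single fixed index $\mu$ (the one produced inside Lemma \ref{lem:inverseclosed}), and one further application of \eqref{fM_{rai}} bounds these by $H^k M^\nu_k$. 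In total only two index-raisings occur, and no (FdB)-type estimate with the $M_j$-prefactor is ever needed. If you want to salvage an inductive scheme, you must neutralize the index problem before the induction starts---either by a majorant argument as in Proposition \ref{prop:ODEr}, where \eqref{fM_{rai}} is applied once per fixed target order to convert the weights into the scalars $p^\mu_k$, or by replacing $M^\la$ with the regularization $N_k:=\max_{j\le k}(M^\la_j)^{k/j}$, which has $N_k^{1/k}$ increasing by construction and is sandwiched between $M^\la$ and $C^kM^\mu_k$ thanks to \eqref{fM_{rai}}. As written, your step from order $k$ to order $k+1$ does not close.
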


\begin{proof}
  We adapt the proof of \cite{Koike96}. Different arguments were given in \cite{Komatsu79} and \cite{Yamanaka89}, 
  under more restrictive assumptions also in \cite{Dynkin80} and \cite{BM04}.
  By the classical $C^\infty$ inverse mapping theorem 
  there exist neighborhoods $x_0 \in U_0 \subseteq U$ and $f(x_0) \in V_0 \subseteq V$ and a 
  $C^\infty$-mapping $g : V_0 \to U_0$ such that $f \o g = \on{id}_{V_0}$.
  We can assume that $(f'(x))^{-1}$ is bounded for $x \in U_0$.
  We shall show that $g$ satisfies \eqref{eq:ginv}.

  Let $S_k \in C^\infty(U_0, L(F,E))$, $k\ge 1$, be given and define $R_k(x)$, $k \ge 0$, for $x \in U_0$ recursively by setting
  \begin{equation} \label{eq:rec}
    R_0(x) := \id_E, \quad R_k(x) := (R_{k-1}(x) S_k(x))'.
  \end{equation}
  Thus $R_{k-1} \in C^\infty(U_0, L(E,L^{k-1}(F,E)))$ and $R_{k-1}(x) S_k(x) \in L^k(F,E)$. It follows that 
  \[
    \|R_k(x)\|_{L(E,L^{k}(F,E))} \le \sum_{\substack{\be_1+\cdots+\be_k=k\\\be_i\ge 0}} N(\be_1,\ldots,\be_k) \prod_{i=1}^k  
    \|S_i^{(\be_i)}(x)\|_{L^{\be_i}(E,L(F,E))},
  \]
  where the nonnegative integers $N(\be_1,\ldots,\be_k)$ are given by the identity
  \[
    \sum_{\substack{\be_1+\cdots+\be_k=k\\\be_i\ge 0}} N(\be_1,\ldots,\be_k) \prod_{i=1}^k  t_i^{\be_i} 
    = \prod_{j=1}^k \sum_{\ell=1}^j t_\ell.
  \]
  Since $\prod_{j=1}^k \sum_{\ell=1}^j t_\ell \le (\sum_{\ell=1}^k t_\ell)^k = \sum k! \prod_{i=1}^k  \frac{t_i^{\be_i}}{\be_i!}$, where
  the sum is taken over all $\be_i\ge 0$ so that $\be_1+\cdots+\be_k=k$, we obtain
  \begin{equation}  \label{eq:RS}
    \|R_k(x)\|_{L(E,L^{k}(F,E))} \le \sum_{\substack{\be_1+\cdots+\be_k=k\\\be_i\ge 0}} k! \prod_{i=1}^k  \frac{\|S_i^{(\be_i)}(x)\|_{L^{\be_i}(E,L(F,E))}}{\be_i!}.
  \end{equation}

  If we set $S(x) = S_k(x):= (f'(x))^{-1}$ for all $k\ge 1$, then for $n\ge 1$ 
  \[
    g^{(n)}(y) = R_{n-1}(x) S(x), \quad (x=g(y)), 
  \]
  where the sequence $R_n$ is defined by \eqref{eq:rec}. 
  Applying Lemma \ref{lem:inverseclosed} to $T = f'$ and using \eqref{eq:RS}, we find that there exist $\mu,\nu \in \La$ and $D,H,\si,\ta \ge1$ so that
  \begin{align*}
    \|R_k(x)\|_{L(E,L^{k}(F,E))} \le \sum_{\substack{\be_1+\cdots+\be_k=k\\\be_i\ge 0}} k! (D\si)^k \prod_{i=1}^k    M^\mu_{\be_i} 
    \le H \ta^k k! M^\nu_k,
  \end{align*}
  by \eqref{fM_{rai}}. This implies \eqref{eq:ginv}.
\end{proof}

\section{Proof of Theorem \ref{thm:bfM}: the Beurling case} \label{sec:proofB}

The structure of the proof of the equivalence of the six items in Theorem \ref{thm:bfM} is again 
represented by the diagram in \eqref{eq:diag}, where now: 
\begin{enumerate}
    \item[$(6')$] If $f \in \EbfM(\R)$ and $f(0) \ne 0$ then $1/f$ is $\EbfM$ on its domain of definition.  
\end{enumerate}

\subsection{The equivalences \texorpdfstring{$(1) \Leftrightarrow (2) \Leftrightarrow (3)$}{(1)<=>(2)<=>(3)}}

Lemma~\ref{lem:raiFdB} implies $(1) \Leftrightarrow (2)$, since 
\eqref{fM_H} follows from \eqref{fM_(Com)}.
The equivalence of (2) and (3)
was shown in \cite[4.11]{RainerSchindl12}.

\subsection{The cycle \texorpdfstring{$(1) \Rightarrow (4) \Rightarrow (6') \Rightarrow (1)$}{(1)=>(4)=>(6')=>(1)}} \label{ssec:cycle1B}

That (4) implies $(6')$ follows in the same way as in the Roumieu case, see Subsection \ref{ssec:cycle1}. 
The implication $(6')$ $\Rightarrow$ (1) is a consequence of the following lemma.

\begin{lemma}
  If $\EbfM(\R)$ is inverse closed, then $\fM$ satisfies \eqref{fM_(rai)}. 
\end{lemma}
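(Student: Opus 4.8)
The plan is to prove the contrapositive, following the classical constructions of Rudin \cite{Rudin62}, Bruna \cite{Bruna80/81}, and Hörmander (for the quasianalytic case), adapted via Siddiqi \cite{Siddiqi90} to the Beurling weight-matrix setting; this mirrors the Roumieu implication $(6')\Rightarrow(1)$, which was obtained in the proof of \cite[4.9]{RainerSchindl12}, with the roles of the two index families $j\le k$ interchanged. Concretely, I would assume that \eqref{fM_(rai)} fails and construct a strictly positive $f\in\EbfM(\R)$ whose reciprocal $1/f$ leaves $\EbfM(\R)$; since $f$ is non-vanishing on all of $\R$, this directly contradicts inverse closedness (and a fortiori the reciprocal property $(6')$).

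Negating \eqref{fM_(rai)} provides a fixed $\la\in\La$ such that for every $\mu\in\La$ and every $C>0$ there are indices $j\le k$ with $(M^\mu_j)^{1/j}>C\,(M^\la_k)^{1/k}$. First I would fix countable cofinal families (the limits defining $\EbfM$ may be taken countable) and, from these bad pairs, select a lacunary sequence of scales $b_n\to\infty$ and amplitudes $\ep_n>0$. The test function is then
\[
  f(x) = 1 - \sum_{n\ge 1} \ep_n\,(1-\cos(b_n x)),
\]
with $\sum_n\ep_n$ so small that $f\ge\tfrac12$ on $\R$; thus $f$ is positive and $f(0)=1$. Because the classes treated here may be quasianalytic, no compactly supported bump is available, which is precisely why globally real-analytic building blocks such as $\cos(b_nx)$ must be used.

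The two verifications are $f\in\EbfM(\R)$ and $1/f\notin\EbfM(\R)$. For the first, membership in the Beurling class requires $|f^{(k)}(x)|\le C_{\nu,\rh}\,\rh^k k!\,M^\nu_k$ for every $\nu\in\La$ and every $\rh>0$; since $|f^{(k)}(x)|\le\sum_n\ep_n b_n^k$, this reduces to the numerical estimate $\sum_n\ep_n b_n^k\le C_{\nu,\rh}\,\rh^k k!\,M^\nu_k$, which I would secure by choosing $\ep_n$ to decay fast relative to $b_n$; here the defining property $\lim_k(k!M^\nu_k)^{1/k}=\infty$ of a weight matrix (equivalently \eqref{fM_(Com)}), together with derivation closedness \eqref{fM_(dc)}, leaves enough room. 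For the second, write $1/f=\sum_{m\ge 0}S^m$ with $S=\sum_n\ep_n(1-\cos(b_nx))$, and apply Fa\`a di Bruno's formula \eqref{eq:FaaF}: through the powers $S^m$ the high frequencies $b_n$ combine, so a derivative of order comparable to $j$ is forced to exceed $\rh^j j!\,M^\mu_j$ for every $\rh$, exactly when the scale is tied to a bad pair $j\le k$. Hence $1/f\notin\cE^{(M^\mu)}(\R)$ for a suitable $\mu$, so $1/f\notin\EbfM(\R)$.

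The main obstacle is the simultaneous calibration of $(b_n,\ep_n)$. The scales and amplitudes must be large enough that the reciprocal genuinely leaves $\cE^{(M^\mu)}$ for the target $\mu$ (this is where the failed \eqref{fM_(rai)}-pairs $j\le k$ are spent), yet small enough that $f$ remains in the \emph{intersection} $\EbfM=\bigcap_\nu\cE^{(M^\nu)}$ against \emph{all} sequences $M^\nu$ and \emph{all} $\rh>0$. Reconciling these two projective ``for all $\rh$'' constraints while keeping $f$ positive, and threading the quantifier structure (one bad $\la$, but a counterexample needed against some $\mu$) through the Fa\`a di Bruno bookkeeping, is the delicate part; apart from this, the argument runs parallel to the Roumieu case of Section~\ref{ssec:cycle1} with the upper and lower indices exchanged.
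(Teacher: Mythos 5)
Your proposal runs in the opposite direction from the paper, and the points you defer as ``delicate'' are exactly where the proof lives. The paper never constructs a counterexample: it extracts quantitative information from the qualitative hypothesis by soft means. From inverse closedness it deduces that the set of invertibles is open in the Fr\'echet algebra $\cB$ of $\EbfM$-functions with finite sup-norm; by {\.Z}elazko's theorem \cite[Thm~13.17]{Zelazko65} (cf.\ \cite[Prop~5.2]{Bruna80/81}) $\cB$ is then locally m-convex, hence admits submultiplicative seminorms; testing submultiplicativity on powers of the exponentials $f_t(x)=e^{itx}$ (which lie in $\EbfM$ for \emph{every} weight matrix, because $\lim_k(k!M^\la_k)^{1/k}=\infty$ by definition) yields estimates $\|f_t^m\|^{M^\la}_{K,\rh}\le C D^m \big(\|f_t\|^{M^\mu}_{[-a,a],\si}\big)^m$, i.e.\ functional inequalities between associated weight functions, from which \eqref{fM_(rai)} follows by \cite[4.11]{RainerSchindl12}. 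This is precisely Bruna's device, invented for the Beurling case because the direct Rudin-type construction you propose becomes genuinely problematic there.

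Concretely, two gaps. First, the negation of \eqref{fM_(rai)} hands you, for each pair $(\mu,C)$, a \emph{single} pair of indices $j\le k$ with $(M^\mu_j)^{1/j}>C(M^\la_k)^{1/k}$. That is a lower bound on one value $M^\mu_j$; it gives no upper control of the full norm data (the associated function) of $M^\mu$ at the frequencies $b_n$ you intend to use. Such upper control is needed twice: to verify $f\in\cE^{(M^\nu)}(\R)$ for \emph{all} $\nu$ and \emph{all} $\rh>0$ (forcing $\ep_n$ to be diagonalized below countably many constraints $e^{-\om_\nu}$), and then to show that the Fa\`a di Bruno lower bound for $1/f$ still beats that super-fast decay of $\ep_n$. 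Converting the pointwise pair inequality into the required functional inequality is the actual mathematical content; the paper sidesteps it by obtaining the functional (submultiplicative) estimate first and only then descending to pairs, the descent being the cited \cite[4.11]{RainerSchindl12}. Second, your target class is misidentified: you conclude ``$1/f\notin\cE^{(M^\mu)}(\R)$ for a suitable $\mu$,'' but in your setup $\mu$ indexes the \emph{large} side of the bad pairs, so $\cE^{(M^\mu)}$ is the hardest class to escape; the failure must be exhibited in $\cE^{(M^\la)}$ itself (note that $\mu=\la$ is admissible in the negation, so $(M^\la_k)^{1/k}$ is not almost increasing), or in a smaller class. Minor but symptomatic: $\lim_k(k!M^\nu_k)^{1/k}=\infty$ is \emph{not} equivalent to \eqref{fM_(Com)}, which asserts $\lim_k(M^\nu_k)^{1/k}=\infty$ and is strictly stronger; and \eqref{fM_(dc)} is not a hypothesis of this lemma. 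Since the calibration of $(b_n,\ep_n)$ --- which you yourself flag as the delicate part --- is never carried out, the proposal does not establish the lemma.
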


\begin{proof}
   We follow an argument of \cite{Bruna80/81}. 
   Consider the algebra $\cA := \EbfM(\R)$ and its subalgebra $\cB := \{f \in \cA : \|f\|_\infty :=\|f\|_{L^\infty(\R)}< \infty\}$.
Endow $\cB$ with the topology generated by all seminorms $Q := \{\|~\|^{M^\la}_{K,\rh}\}_{\la,K,\rh} \cup\{\|~\|_{\infty}\}$. 
Then $\cB$ is a Fr\'echet algebra.
If $f,g \in \cB$ are such that $|f(x)| \ge c >0$ for all $x \in \R$ and $\|f-g\|_{\infty} \le c/2$, then 
$|g(x)| \ge c/2 >0$, i.e., the set $\{f \in \cB : 1/f \in \cB\}$ is open in $\cB$. 
By \cite[Thm~13.17]{Zelazko65}, see also \cite[Prop~5.2]{Bruna80/81}, we may conclude that the algebra $\cB$ is locally m-convex, i.e.,
$\cB$ has an equivalent seminorm
  system $P=\{p\}$ such that $p(fg) \le p(f)p(g)$ for all $f,g \in \cB$. 
  So for each $\la\in \La$, compact $K \subseteq \R$, and $\rh>0$ there exist $p \in P$, $q \in Q$, 
  and constants $C,D >0$ such that
  \[ 
  \|f^m\|^{M^{\la}}_{K,\rh} \le C p(f^m) \le C p(f)^m \le C D^m q(f)^m, \quad f \in \cB, m \in \N.
  \]
  We shall use this inequality for the functions $f_t(x) = e^{itx}$, and, 
  since $\|f_t\|_\infty = 1 \le \|f_t\|^{M^\mu}_{[-a,a],\si}$ for each $\mu \in \La$ and $a,\si>0$, 
  we can replace $q$ in the very same inequality by some seminorm $\|~\|^{M^\mu}_{[-a,a],\si}$.
Then the proof of \cite[4.11$(2)\Rightarrow(3)$]{RainerSchindl12} yields \eqref{fM_(rai)}.
\end{proof}
  
The remaining implication (1) $\Rightarrow$ (4) (as well as (1) $\Rightarrow$ (5) below) 
we shall deduce from the corresponding result in the Roumieu case by means of the following lemma, 
which is a variation of \cite[Lemma~6]{Komatsu79b}.

\begin{lemma} \label{Komatsu}
  Let $L\in \R_{\ge 0}^\N$ and $M^1,M^2,M^3 \in \R_{>0}^\N$ 
  be sequences satisfying $L \lhd M^1 \le M^2 \le M^3$ and $(M^i_k)^{1/k} \to \infty$ for $i=1,2,3$, and assume that 
  there exist $1 \le H_1 \le H_2$ so that 
  \begin{equation} \label{eq:cond123}
    (M^1_j)^{1/j} \le H_1 (M^2_k)^{1/k} \le H_2 (M^3_\ell)^{1/\ell}   \quad \text{ for } j \le k \le \ell.
  \end{equation} 
  Then there exist sequences $N^1, N^2 \in \R_{>0}^\N$ with $L \le N^1 \le N^2 \lhd  M^3$  satisfying  
  $(N^i_k)^{1/k} \to \infty$ for $i=1,2$ and so that 
  \begin{equation} \label{eq:cond12}
    (N^1_j)^{1/j} \le \sqrt{H_1} (N^2_k)^{1/k}    \quad \text{ for } j \le k.
  \end{equation}
\end{lemma}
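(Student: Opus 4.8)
The plan is to pass to the $k$-th root quantities $a_k:=(M^1_k)^{1/k}$, $b_k:=(M^2_k)^{1/k}$, $c_k:=(M^3_k)^{1/k}$, and to use that $P\lhd Q$ is equivalent to $(P_k/Q_k)^{1/k}\to 0$. In these terms the hypotheses read $a_k\le b_k\le c_k$, $a_k,b_k,c_k\to\infty$, $L_k^{1/k}/a_k\to 0$, and, from the right inequality $H_1(M^2_j)^{1/j}\le H_2(M^3_k)^{1/k}$ for $j\le k$ in \eqref{eq:cond123}, the one-sided bound $b_j\le (H_2/H_1)\,c_k$ for all $j\le k$. I would build $N^1,N^2$ so that their roots are \emph{non-decreasing}; then the estimate \eqref{eq:cond12} is automatic (with constant $1\le\sqrt{H_1}$), and all the work is to keep $N^2$ strictly below $M^3$ while keeping $N^1$ above $L$ and both divergent.

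Concretely, set $\epsilon_k:=\sup_{j\ge k}L_j^{1/j}/a_j$, which is non-increasing and tends to $0$ by $L\lhd M^1$, and put $\rho_k:=\max(\epsilon_k,a_k^{-1/2})$; thus $\rho_k\to 0$, while $\rho_k a_k\ge a_k^{1/2}\to\infty$ and $\rho_k a_k\ge\epsilon_k a_k\ge L_k^{1/k}$. Define the non-decreasing sequences
\[
p_k:=\max_{j\le k}\rho_j a_j,\qquad q_k:=\max_{j\le k}\rho_j b_j,
\]
and let $N^1_k:=p_k^{\,k}$, $N^2_k:=q_k^{\,k}$. The elementary properties are then immediate: $N^1,N^2\in\R_{>0}^\N$; from $a_j\le b_j$ one gets $p_k\le q_k$, hence $N^1\le N^2$; from $\rho_k a_k\ge L_k^{1/k}$ one gets $N^1_k\ge L_k$, hence $L\le N^1$; from $p_k\ge a_k^{1/2}$ one gets $(N^1_k)^{1/k},(N^2_k)^{1/k}\to\infty$; and since $p,q$ are non-decreasing, $(N^1_j)^{1/j}=p_j\le p_k\le q_k=(N^2_k)^{1/k}\le\sqrt{H_1}\,(N^2_k)^{1/k}$ for $j\le k$, which is \eqref{eq:cond12}.

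The one substantive point—and the step I expect to be the main obstacle—is $N^2\lhd M^3$, i.e. $q_k/c_k\to 0$, since the running maximum could a priori retain large early terms $\rho_j b_j$ and stay comparable to $c_k$. I would control this by the uniform bound $\rho_j b_j\le (H_2/H_1)\,\rho_j\,c_k$, valid for all $j\le k$ by the one-sided hypothesis above, and split the maximum at a threshold $J$: given $\eta>0$, choose $J$ with $(H_2/H_1)\rho_j\le\eta/2$ for $j\ge J$, so $\max_{J\le j\le k}\rho_j b_j\le(\eta/2)c_k$; the finitely many terms $\max_{j<J}\rho_j b_j$ are bounded by a constant, which is $\le(\eta/2)c_k$ for $k$ large as $c_k\to\infty$. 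Hence $q_k\le\eta\,c_k$ eventually, giving $q_k/c_k\to 0$ and thus $N^2\lhd M^3$. This completes the construction; it uses the right-hand inequality of \eqref{eq:cond123} together with $L\lhd M^1$, while the monotonicity of $p,q$ already secures \eqref{eq:cond12} with room to spare.
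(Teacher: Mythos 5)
Your proof is correct, but it is not the paper's argument, and the difference is worth recording. The paper first replaces $L$ by a positive sequence (so it may divide by $L_j$) and then defines the new sequences directly by $(N^i_k)^{1/k}:=\max\bigl\{\sqrt{(M^i_k)^{1/k}},\,\max_{j\le k}L_j^{1/j}\bigr\}$, cf.\ \eqref{eq:Ndef}; these roots need not be monotone, so \eqref{eq:cond12} is \emph{not} automatic there: it is deduced from the left-hand inequality in \eqref{eq:cond123}, and the square root in the formula is exactly what converts the constant $H_1$ into $\sqrt{H_1}$. The relations $N^1\lhd M^2$ and $N^2\lhd M^3$ are then proved by the same threshold-splitting device you use (small indices are killed by $(M^{i+1}_k)^{1/k}\to\infty$, large indices by the smallness of $L_j^{1/j}/(M^i_j)^{1/j}$ together with \eqref{eq:cond123}). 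Your route instead damps the roots by $\rho_j=\max\bigl(\sup_{h\ge j}L_h^{1/h}/(M^1_h)^{1/h},\,(M^1_j)^{-1/(2j)}\bigr)$ and then monotonizes by a running maximum. This buys you two things: \eqref{eq:cond12} holds with constant $1$ rather than $\sqrt{H_1}$, and the left-hand inequality of \eqref{eq:cond123} is never needed --- only $M^1\le M^2\le M^3$, $L\lhd M^1$, divergence of the roots, and the bound $(M^2_j)^{1/j}\le (H_2/H_1)(M^3_k)^{1/k}$ for $j\le k$ enter; you also avoid the positivity adjustment of $L$, since you never divide by $L_j$. What the paper's construction yields in exchange is a more compact formula and, as a by-product, the relation $N^1\lhd M^2$ (not required by the statement; your construction gives it as well, by applying your splitting argument to $p_k/(M^2_k)^{1/k}$ using the left inequality). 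Your one substantive step --- $q_k/c_k\to 0$, which under the equivalence $P\lhd Q\Leftrightarrow (P_k/Q_k)^{1/k}\to 0$ is precisely $N^2\lhd M^3$ --- is argued correctly, so the proposal stands as a complete proof.
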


\begin{proof}
  Without loss of generality we may assume that $L_k>0$ for all $k$; 
  otherwise replace $L$ by $\bar L$ where $\bar L_k = L_k$ if $L_k>0$ and $\bar L_k = 1$ if $L_k=0$ 
  (we still have $\bar L \lhd M^1$ since $(M^1_k)^{1/k} \to \infty$). 
  The sequences, for $i=1,2,3$,  
  \[
    c^i_k := \Big(\frac{M^i_k}{L_k}\Big)^{\frac 1 k}
  \]
  satisfy $c^i_k \to \infty$, since $L \lhd M^i$.
  We define, for $i=1,2$, 
  \begin{equation} \label{eq:Ndef}
    (N^i_k)^{\frac{1}{k}} := 
  \max\Big\{\sqrt{(M^i_k)^{\frac{1}{k}}},\max_{j \le k}
   \frac{(M^i_j)^{\frac{1}{j}}}{c^i_j} 
  \Big\}
  = 
  \max\Big\{\sqrt{(M^i_k)^{\frac{1}{k}}},\max_{j \le k}
    L_j^{\frac 1 j} 
  \Big\}.  
  \end{equation}
  Then clearly $(N^i_k)^{1/k} \to \infty$ and $L \le N^1 \le N^2$.
  For each $\ep>0$ there exists $j_{\ep,i}$ so that $1/c^i_j \le \ep$ for $j>j_{\ep,i}$.
  Thus, by \eqref{eq:cond123},
  \begin{align*}
  \Big(\frac{N^i_k}{M^{i+1}_k}\Big)^{\frac{1}{k}} 
  &\le
  \max\Big\{(M^{i+1}_k)^{-\frac{1}{2k}}, (M^{i+1}_k)^{-\frac{1}{k}}\max_{j \le k}
   \frac{(M^i_j)^{\frac{1}{j}}}{c^i_j  }
  \Big\} 
  \\
  & \le 
  \max\Big\{(M^{i+1}_k)^{-\frac{1}{2k}}, 
  (M^{i+1}_k)^{-\frac{1}{k}} \max_{j \le j_{\ep,i}}
   \frac{(M^{i}_j)^{\frac{1}{j}}}{c^i_j }, H_i \ep
  \Big\}
  \le H_i \ep, 
  \end{align*}
  for $k$ sufficiently large, i.e., $N^1 \lhd M^2$ and $N^2 \lhd M^3$.
  It remains to show \eqref{eq:cond12}.
  But this is immediate from \eqref{eq:cond123} and \eqref{eq:Ndef}, indeed for $j \le k$,
  \begin{align*}
     (N^1_j)^{\frac{1}{j}} =  
      \max\Big\{\sqrt{(M^1_j)^{\frac{1}{j}}},\max_{h \le j}
      L_h^{\frac 1 h}
      \Big\} 
      \le  
      \max\Big\{\sqrt{H_1(M^2_k)^{\frac{1}{k}}},\max_{h \le k}
      L_h^{\frac 1 h} \Big\}
      \le \sqrt{H_1} (N^2_k)^{\frac{1}{k}}
  \end{align*}
  as required. 
\end{proof}

The following proposition implies (1) $\Rightarrow$ (4), by the analogue of Remark \ref{rem:shift}.

\begin{proposition}\label{prop:ODEb}
Let $\fM$ be a weight matrix satisfying
\eqref{fM_(rai)} and \eqref{fM_(Com)}.
Let $X$ be a Banach space and let $f : W \to X$ be an $C^\infty$-mapping defined in an open 
subset $W \subseteq X \times \R$ and satisfying
\begin{align} \label{eq:fb}
\begin{split}
  \forall \nu \in \La, \rh>0  ~\exists C \ge1 &~\forall (k,\ell) \in \N^2 ~\forall  (x,t) \in W : 
  \\
  &\|f^{(k,\ell)}(x,t)\|_{L^{k,\ell}(X,\R;X)} \le C \rh^{k+\ell} (k+\ell-1)! M^\nu_{k+\ell-1}.
\end{split}
\end{align}
Then the solution $x : I \to X$ of the initial value problem
\begin{equation} \label{eq:IVPb}
  x'(t) = f(x(t),t), \quad x(0) = x_0,  
\end{equation}
satisfies
\begin{align} \label{eq:xb}
  \forall \la \in \La, \si>0 ~\exists D \ge1 ~\forall k \in \N ~\forall  t \in I : 
  \|x^{(k)}(t)\| \le D \si^{k} (k-1)! M^\la_{k-1}.
\end{align}
\end{proposition}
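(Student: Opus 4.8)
The plan is to reduce the Beurling estimate \eqref{eq:xb} to the Roumieu result already established in Proposition \ref{prop:ODEr}, using Lemma \ref{Komatsu} to interpolate between the relevant weight sequences. The key structural observation is that the Beurling hypothesis \eqref{eq:fb} is a quantified-over-all statement (for every $\nu$ and every $\rho$), whereas the Roumieu hypothesis \eqref{eq:f} is existential; dually, \eqref{eq:xb} demands a bound for every $\la$ and every $\si$. So I must fix, at the outset, an arbitrary target index $\la \in \La$ and an arbitrary $\si > 0$, and produce the single constant $D$.

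First I would fix $\la \in \La$ and $\si > 0$ as in the conclusion. By \eqref{fM_(rai)} there is a $\mu \in \La$ realizing the (rai)-inequality with $\la$ playing the role of the larger index, and iterating once more yields $\nu \in \La$ with a chain of three sequences $M^\nu \le M^\mu \le M^\la$ satisfying a condition of the shape \eqref{eq:cond123}; here I would also use \eqref{fM_(Com)} to guarantee $(M^i_k)^{1/k} \to \infty$ for each of the three. I then apply Lemma \ref{Komatsu} (with $L$ taken to be, say, a sequence built from the finitely many derivatives of $f$ that must be controlled, or more simply a fixed sequence $L \lhd M^\nu$ forcing real-analytic inclusion) to obtain intermediate sequences $N^1 \le N^2$ with $N^2 \lhd M^\la$ and satisfying the single-sequence (rai)-type inequality \eqref{eq:cond12}. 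The point of the lemma is exactly this: it converts the genuinely matrix-valued Roumieu/Beurling gap into a \emph{single} weight sequence $N$ whose associated class is Roumieu and for which Proposition \ref{prop:ODEr} applies verbatim.

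Next I would verify that the right-hand side of the Beurling hypothesis \eqref{eq:fb}, when specialized to $\nu$ and a suitably small $\rho$, yields an estimate of the Roumieu form \eqref{eq:f} for the weight sequence $N^1$ produced by the lemma — this uses $N^1 \lhd M^\nu$ (more precisely the inclusion $\cE^{\{N^1\}} \subseteq \cE^{(M^\nu)}$ furnished by $\lhd$) together with the freedom to choose $\rho$ arbitrarily small in \eqref{eq:fb} to absorb the interpolation constants. Having placed $f$ inside the hypothesis of Proposition \ref{prop:ODEr} relative to the single sequence $N^1$, I invoke that proposition to conclude that the solution $x$ satisfies a Roumieu bound in terms of $N^2$ (after one index shift via Remark \ref{rem:wlc}). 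Finally, since $N^2 \lhd M^\la$, this Roumieu bound for $x$ translates into the Beurling bound \eqref{eq:xb} for the prescribed $\la$ and $\si$, with a constant $D$ depending only on the fixed data. Because $\la$ and $\si$ were arbitrary, \eqref{eq:xb} holds in full.

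The main obstacle I anticipate is \emph{bookkeeping of the quantifier order}, not any new analytic difficulty: one must be careful that the sequence $N^1$ (hence the constant $C$ coming from the Roumieu input) is chosen \emph{after} $\la$ and $\si$ are fixed but using only finitely many invocations of \eqref{fM_(rai)}, so that the final $D$ is well defined and independent of $k$ and $t$. A secondary subtlety is matching the index shifts: \eqref{eq:fb} carries the factor $(k+\ell-1)!\,M^\nu_{k+\ell-1}$ while Proposition \ref{prop:ODEr} expects \eqref{eq:f} in exactly that shifted form, so the shift is already accounted for on the input side, and on the output side one uses Remark \ref{rem:wlc} precisely as flagged in Remark \ref{rem:shift}. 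Once the quantifiers and shifts are aligned, the analytic content is entirely inherited from the Roumieu case.
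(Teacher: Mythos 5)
Your overall architecture is the same as the paper's: fix $\la$, apply \eqref{fM_(rai)} twice to obtain a chain $M^\nu\le M^\mu\le M^\la$ as in \eqref{eq:lamunu}, use \eqref{fM_(Com)} for the divergence hypotheses of Lemma~\ref{Komatsu}, produce $N^1\le N^2\lhd M^\la$ satisfying \eqref{eq:cond12}, re-run the Roumieu argument with the pair $(N^1,N^2)$, and convert the resulting $N^2$-Roumieu bound on the solution into the $M^\la$-Beurling bound \eqref{eq:xb} via $N^2\lhd M^\la$. However, there is a genuine gap at the pivotal step, namely how $f$ (after reduction to the autonomous problem, $g$) acquires a Roumieu-type estimate with respect to $N^1$. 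Neither of your proposed choices of $L$ accomplishes this, and your transfer mechanism points the wrong way. First, the lemma applied with $(M^1,M^2,M^3)=(M^\nu,M^\mu,M^\la)$ yields $N^1\lhd M^\mu$, not $N^1\lhd M^\nu$. Second, even granting $N^1\lhd M^\nu$, the inclusion $\cE^{\{N^1\}}\subseteq\cE^{(M^\nu)}$ transfers bounds in the direction opposite to the one you need: you must pass from Beurling-$M^\nu$ bounds on $g$ to Roumieu-$N^1$ bounds, which would morally require $M^\nu\preceq N^1$. This fails in general: by \eqref{eq:Ndef} one can have $(N^1_k)^{1/k}=\sqrt{(M^\nu_k)^{1/k}}$, so that $(M^\nu_k/N^1_k)^{1/k}=\sqrt{(M^\nu_k)^{1/k}}\to\infty$ by \eqref{fM_(Com)}; hence $\rho^k M^\nu_{k-1}\le C\si^k N^1_{k-1}$ fails for large $k$ whatever fixed $\rho,\si,C$ you take, and no amount of choosing $\rho$ small in \eqref{eq:fb} (whose constant $C$ depends on $\rho$ and must serve all $k$ simultaneously) absorbs this.

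The paper closes exactly this gap by tying $L$ to the function itself: it sets $L_{k-1}:=\sup_{y\in U}\tfrac{1}{(k-1)!}\|g^{(k)}(y)\|_{L^k(Y,Y)}$, the full sequence of normalized sup-norms of \emph{all} derivatives (your phrase ``finitely many derivatives that must be controlled'' is off, and your alternative of a fixed $L\lhd M^\nu$ unrelated to $f$ severs the link between $N^1$ and $f$ entirely). With this choice, the Beurling hypothesis \eqref{eq:g2Beu} is literally the statement $L\lhd M^\nu$ required by Lemma~\ref{Komatsu}, and the lemma's conclusion $L\le N^1$ is literally the Roumieu-type input, with constants $C=\rh=1$, needed to repeat the proof of Proposition~\ref{prop:ODEr} with $N^1$ in place of $M^\la$ and $N^2$ in place of $M^\mu$. (A further small point: one repeats the \emph{proof} rather than applying the proposition verbatim, since $N^1,N^2$ need not be weakly log-convex weight sequences; the argument only uses \eqref{eq:cond12}.) Once $L$ is chosen this way, the remainder of your outline goes through exactly as in the paper.
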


\begin{remark}
  The analogue of Remark \ref{rem:1} applies.
\end{remark}

\begin{proof}
  In the same way as in the Roumieu case (Proposition \ref{prop:ODEr}) we may reduce to the initial value problem \eqref{eq:IVP2}, 
  where now $g \in C^\infty(U,Y)$ satisfies 
  \begin{align} \label{eq:g2Beu}
     \forall \nu \in \La, \rh>0 ~\exists C \ge1 ~\forall k \in \N ~\forall  y \in U : \|g^{(k)}(y)\|_{L^k(Y,Y)} \le C \rh^k (k-1)! M^\nu_{k-1}.
  \end{align} 
  There is a unique solution $y \in C^\infty(I,Y)$ defined on some interval $I$. 
  Let $\la \in \La$ be fixed. 
  By \eqref{fM_(rai)} 
  there exist $\mu, \nu \in \La$ and $H,J\ge 1$ such that
  \begin{equation} \label{eq:lamunu}
    (M^\nu_j)^{1/j} \le H (M^\mu_k)^{1/k} \le J (M^\la_\ell)^{1/\ell}   \quad \text{ for } j \le k \le \ell,
  \end{equation} 
  and, by \eqref{fM_(Com)}, 
  \begin{equation} \label{eq:unbounded}
    \lim (M^\nu_k)^{1/k} =\lim (M^\mu_k)^{1/k} =\lim (M^\la_k)^{1/k} =\infty.  
  \end{equation}
  We may assume without loss of generality that $\nu \le \mu \le \la$ and thus $M^\nu \le M^\mu \le M^\la$.
  If we set 
  \begin{align*}
    L_{k-1} := \sup_{y \in U}  \tfrac{1}{(k-1)!} \|g^{(k)}(y)\|_{L^k(Y,Y)},
  \end{align*}
  then \eqref{eq:g2Beu} implies $L \lhd M^\nu$.
  Thus, by applying Lemma \ref{Komatsu}, 
  we find sequences $N^1$ and $N^2$ with $L \le N^1 \le N^2 \lhd  M^\la$  satisfying  
  $(N^i_k)^{1/k} \to \infty$ for $i=1,2$ and so that 
  \begin{equation*} 
    (N^1_j)^{1/j} \le \sqrt{H} (N^2_k)^{1/k}    \quad \text{ for } j \le k.
  \end{equation*}
  Repeating the proof of Proposition \ref{prop:ODEr} (with $N^1$ in place of $M^\la$ and $N^2$ in place of $M^\mu$)
  we may conclude that
  \begin{align*} 
  \exists D,\si \ge1 ~\forall k \in \N ~\forall  t \in I : 
  \|y^{(k)}(t)\| \le D \si^{k} (k-1)! N^2_{k-1}
  \end{align*}
  which implies
  \begin{align*} 
  \forall \ta>0 ~\exists E \ge1 ~\forall k \in \N ~\forall  t \in I : 
  \|y^{(k)}(t)\| \le E \ta^{k} (k-1)! M^\la_{k-1},
  \end{align*}
  since $N^2 \lhd  M^\la$. 
  As $\la$ was arbitrary, the result follows.
\end{proof}

\subsection{The cycle \texorpdfstring{$(1) \Rightarrow (5) \Rightarrow (6) \Rightarrow (6') \Rightarrow (1)$}{(1)=>(5)=>(6)=>(6')=>(1)}} 

Obviously, $(5) \Rightarrow (6) \Rightarrow (6')$, and  
$(1) \Leftrightarrow (6')$, by Subsection \ref{ssec:cycle1B}.
Finally, (1) $\Rightarrow$ (5) follows from the following proposition.

\begin{proposition}\label{prop:inverseB}
Let $\fM$ be a weight matrix satisfying \eqref{fM_(rai)} and \eqref{fM_(Com)}.
Let $f : U \to V$ be a $C^\infty$-mapping between open subsets 
$U \subseteq E$ and $V \subseteq F$ of Banach spaces $E, F$ satisfying 
\begin{align} \label{eq:finvB}
\begin{split}
  \forall \nu \in \La, \rh>0 ~\exists C \ge1 &~\forall k \in \N ~\forall  x \in U : 
  \|f^{(k)}(x)\|_{L^{k}(E,F)} \le C \rh^{k-1} (k-1)! M^\nu_{k-1}
\end{split}
\end{align}
and so that $f'(x_0) \in L(E,F)$ is invertible. 
Then there exist neighborhoods $x_0 \in U_0 \subseteq U$ and $f(x_0) \in V_0 \subseteq V$ and a 
$C^\infty$-mapping $g : V_0 \to U_0$ satisfying 
\begin{align} \label{eq:ginvB}
\begin{split}
  \forall \mu \in \La,\si>0 ~\exists D \ge1 &~\forall k \in \N ~\forall  y \in V_0 : 
  \|g^{(k)}(y)\|_{L^{k}(F,E)} \le D \si^{k-1} (k-1)! M^\mu_{k-1}
\end{split}
\end{align}
and such that $f \o g = \on{id}_{V_0}$.
\end{proposition}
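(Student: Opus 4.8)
The plan is to reduce the Beurling inverse-function statement (Proposition~\ref{prop:inverseB}) to the already-proved Roumieu version (Proposition~\ref{prop:inverse}), exactly as the analogous ODE-proposition (Proposition~\ref{prop:ODEb}) was reduced to Proposition~\ref{prop:ODEr} via Lemma~\ref{Komatsu}. First I would fix an arbitrary $\mu \in \La$ and $\si>0$ at which I must verify the conclusion \eqref{eq:ginvB}. Using \eqref{fM_(rai)} I would produce indices $\nu \le \mu' \le \mu$ and constants $H, J \ge 1$ realizing the two-step chain
\[
  (M^\nu_j)^{1/j} \le H (M^{\mu'}_k)^{1/k} \le J (M^\mu_\ell)^{1/\ell} \quad \text{ for } j \le k \le \ell,
\]
and invoke \eqref{fM_(Com)} to guarantee that all three sequences satisfy $(M^\bullet_k)^{1/k}\to\infty$.

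Next I would encode the given Beurling hypothesis \eqref{eq:finvB} on $f$ as a single weight sequence. Setting
\[
  L_{k-1} := \sup_{x \in U} \tfrac{1}{(k-1)!}\, \|f^{(k)}(x)\|_{L^k(E,F)},
\]
the $\forall\nu\,\forall\rh$ quantifier structure of \eqref{eq:finvB} translates precisely into $L \lhd M^\nu$. I would then feed $L$ together with $M^\nu \le M^{\mu'} \le M^\mu$ into Lemma~\ref{Komatsu} to extract two auxiliary sequences $N^1 \le N^2$ with $L \le N^1 \le N^2 \lhd M^\mu$, both tending to infinity after taking $k$-th roots, and satisfying the single-step almost-increasing estimate $(N^1_j)^{1/j} \le \sqrt{H}\,(N^2_k)^{1/k}$ for $j \le k$. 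The point is that $N^1, N^2$ form a (two-sequence) weight matrix satisfying the Roumieu condition \eqref{fM_{rai}}, while $L \le N^1$ means $f$ satisfies a Roumieu-type bound of the form \eqref{eq:finv} with $M^\la$ replaced by $N^1$ (the index shift in \eqref{eq:finv}/\eqref{eq:ginv} being handled by the convention $(-1)! M_{-1}:=1$ and Remark~\ref{rem:wlc}, as flagged in Remark~\ref{rem:shift}).

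I would then apply Proposition~\ref{prop:inverse} to this two-sequence matrix. It yields neighborhoods $U_0, V_0$ and a $C^\infty$ inverse $g : V_0 \to U_0$ with $f \o g = \on{id}_{V_0}$ obeying a Roumieu estimate
\[
  \exists D, \si' \ge 1 ~\forall k ~\forall y \in V_0 : \|g^{(k)}(y)\|_{L^k(F,E)} \le D (\si')^{k-1} (k-1)!\, N^2_{k-1}.
\]
Since $N^2 \lhd M^\mu$, the relation $N^2 \lhd M^\mu$ means $\forall\si>0\,\exists C>0\,\forall k : N^2_k \le C \si^k M^\mu_k$, so absorbing the geometric factor $(\si')^{k-1}$ converts this into the required Beurling bound \eqref{eq:ginvB} at the prescribed $\mu$ and $\si$. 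As $\mu$ and $\si$ were arbitrary, the proposition follows. One subtlety I would watch is that the neighborhoods $U_0, V_0$ and the inverse $g$ come from the classical $C^\infty$ inverse function theorem and hence do not depend on $\mu$; only the quantitative estimate does, so reusing a single $g$ across all choices of $\mu$ is legitimate.

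The main obstacle, as in the ODE case, is organizing the quantifiers correctly: the Beurling hypothesis is a $\forall\nu$ statement and the conclusion is a $\forall\mu$ statement, so I must be careful that Lemma~\ref{Komatsu} is applied \emph{after} fixing the target $\mu$, and that the sequences $N^1, N^2$ produced depend on $\mu$ while the geometric inverse map $g$ does not. Beyond this bookkeeping, the argument is essentially a transcription of the Roumieu proof, so I expect no genuinely new analytic difficulty.
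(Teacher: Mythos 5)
Your reduction follows the same route as the paper's own proof: fix the target index, use \eqref{fM_(rai)} twice to get a two-step chain as in \eqref{eq:lamunu}, encode the Beurling hypothesis \eqref{eq:finvB} as a single sequence $L \lhd M^\nu$, apply Lemma~\ref{Komatsu}, and return to the Beurling estimate via $N^2 \lhd M^\mu$; your observation that $U_0$, $V_0$, $g$ come from the classical $C^\infty$ inverse function theorem and may therefore be reused for every target index is also correct. However, there is a genuine gap at the step that licenses everything else: the claim that $N^1, N^2$ form a two-sequence weight matrix satisfying \eqref{fM_{rai}}, which is what you need in order to cite Proposition~\ref{prop:inverse} as a black box. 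For a two-element matrix with $N^1 \le N^2$, condition \eqref{fM_{rai}} applied to the member $N^2$ requires a dominating sequence for $N^2$, and either possible choice of dominator yields $(N^2_j)^{1/j} \le C (N^2_k)^{1/k}$ for $j \le k$, i.e.\ forces $(N^2_k)^{1/k}$ itself to be almost increasing. Lemma~\ref{Komatsu} provides no such thing: by \eqref{eq:Ndef}, $(N^2_k)^{1/k}$ is the maximum of $\sqrt{(M^2_k)^{1/k}}$ and an increasing term built from $L$, and hypothesis \eqref{eq:cond123} makes $M^2$ almost increasing only \emph{relative to} $M^3$, not internally; when $L$ is small, the oscillations of $(M^2_k)^{1/k}$ survive in $N^2$. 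That individual members of a matrix satisfying \eqref{fM_(rai)} need not be almost increasing is precisely the phenomenon of Remark~\ref{rem:rai} and the Appendix, so this cannot be waved away.

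The paper's repair is to \emph{not} cite Proposition~\ref{prop:inverse}, but to re-run its proof (and that of Lemma~\ref{lem:inverseclosed}) with $N^1$ in the role of the hypothesis sequence and $N^2$ in the role of the conclusion sequence, so that only the one-step relation \eqref{eq:cond12} is ever needed. If you take this route, there is still a point to address which the paper leaves implicit: the Roumieu proof as written invokes \eqref{fM_{rai}} \emph{twice} --- once inside Lemma~\ref{lem:inverseclosed} to collapse $M^\la_{\al_1}\cdots M^\la_{\al_j} \le H^k M^\mu_k$, and a second time after \eqref{eq:RS} to collapse products of $M^\mu$'s into $M^\nu_k$ --- whereas the pair $(N^1,N^2)$ supports only one such collapse. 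Two fixes work: either carry un-collapsed products of $N^1$'s through both stages (a product of products with total index $k$ is again a product with total index $k$) and apply \eqref{eq:cond12} a single time at the very end; or apply \eqref{fM_(rai)} three times to get a longer chain and observe that the construction \eqref{eq:Ndef} then produces three sequences $N^1 \le N^2 \le N^3$ with consecutive one-step relations and $N^3 \lhd M^\mu$, after which the Roumieu proof runs verbatim. With either repair your argument closes, and the rest of your quantifier bookkeeping is sound; but as written, the black-box invocation of Proposition~\ref{prop:inverse} rests on a hypothesis that is false in general.
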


\begin{proof}
  Let $\la \in \La$ be fixed. 
  By \eqref{fM_(rai)} and \eqref{fM_(Com)}, 
  there exist $\mu, \nu \in \La$ satisfying \eqref{eq:lamunu} and \eqref{eq:unbounded}. 
  If we set 
  \begin{align*}
    L_{k-1} := \sup_{x \in U}  \tfrac{1}{(k-1)!} \|f^{(k)}(x)\|_{L^k(E,F)},
  \end{align*}
  then \eqref{eq:finvB} implies $L \lhd M^\nu$.
  In analogy to the proof of Proposition \ref{prop:ODEb}, 
  we repeat the proof of Proposition \ref{prop:inverse} (and that of Lemma \ref{lem:inverseclosed}) with the sequences 
  $N^1$ and $N^2$ provided by Lemma \ref{Komatsu}.  
\end{proof}

\section{Proof of Theorems \ref{thm:rM}, \ref{thm:bM}, \ref{thm:rom}, and \ref{thm:bom}} \label{sec:proofMom}

\subsection{Proof of Theorems \ref{thm:rM} and \ref{thm:bM}}

Apply Theorems \ref{thm:rfM} and \ref{thm:bfM} to the constant weight matrix $\fM = \{M\}$.

\subsection{Proof of Theorems \ref{thm:rom} and \ref{thm:bom}}

For a weight function $\om$ and each $\rh>0$ consider the sequence $\Om^\rh \in \R_{>0}^\N$ defined by 
\[
\Om^\rh_k:=\tfrac{1}{k!}\exp(\tfrac{1}{\rh} \vh^*(\rh k)).
\]
By the properties of $\vh^*$,
the collection $\fW := \{\Om^\rh : \rh>0\}$ forms a weight matrix, and we have 
$\cE^{[\om]}(U) = \cE^{[\fW]}(U)$ 
as locally convex spaces, by \cite{RainerSchindl12}.
Moreover, $\fW$ 
satisfies \eqref{fM_(dc)} as well as \eqref{fM_{dc}}. 
If $\om(t)=O(t)$ as $t\to \infty$, then $\fW$ satisfies \eqref{fM_H}, 
and if $\om(t)=o(t)$ as $t\to \infty$, then $\fW$ satisfies \eqref{fM_(Com)}.
Thus Theorems \ref{thm:rom} and \ref{thm:bom} are immediate consequences of Theorems~\ref{thm:rfM}, \ref{thm:bfM},   
  and \cite[6.3, 6.5]{RainerSchindl12}.

\appendix

\section{Weight sequences as required in Remark \ref{rem:rai}} \label{appendix}

Let us now find explicit sequences that satisfy the requirements of Remark~\ref{rem:rai}. To this end we 
construct a weight sequence $M=(M_k)$ such that $(M_{k+1}/M_k)^{1/k}$ is bounded, $M^{1/k}_k$ tends to $\infty$ but 
is not almost increasing, and $k!^s \le M_k \le k!^t$ for suitable $s,t > 0$ and sufficiently large $k$. 
Since for every Gevrey sequence $G^s =(k!^s)_k$ (where $s\ge 0$), $(G^s_k)^{1/k}$ is increasing (and tends to $\infty$ if $s>0$), 
the pair of sequences $(G^s, M)$, or $(M,G^t)$, will fulfill the requirements of Remark~\ref{rem:rai} (after adjusting finitely many 
terms of one sequence).

Let $k_j:=2 \uparrow \uparrow j = 2^{2^{{\cdot}^{{\cdot}^ 2}}}$ ($j$ times) for $j\ge 1$ and $k_0:=0$. 
Let $\vh : [0,\infty) \to [0,\infty)$ be the function whose graph is the polygon with vertices $\{v_j=(k_j,\vh(k_j)): j \in \N\}$
defined by
\begin{align*}
  \vh(0):=0, \quad \vh(2):= 8 \log 2,\quad
  \vh(k_j) := 
  \begin{cases}
    k_j \log k_{j+1} & \text{ $j$ even} \\
    k_j \log (k_j k_{j-2}) & \text{ $j$ odd}
  \end{cases}, \quad (j\ge 2).
\end{align*} 
We claim that the sequence $M=(M_k)$ defined by $M_k:= \exp(\vh(k))/k!$ satisfies:
\begin{enumerate}
   \item $M$ is a weight sequence, i.e., $M$ is weakly log-convex, 
   \item $\sup_k (\frac{M_{k+1}}{M_k})^{1/k}< \infty$,
   \item $M^{1/k}_k \to \infty$,
   \item $M_k^{1/k}$ is not almost increasing,
   \item $k!^s \le M_k \le k!^t$ for all $0\le s \le 1/4$, $t> 3$, and all $k\ge k_0(t)$.
\end{enumerate}

To see that $M$ is weakly log-convex it suffices to show that the slopes of the line segments in the graph of $\vh$ are 
increasing. Let $a_j$ denote the slope of the line segment left of the vertex $v_j$.
Then, for $i\ge 2$,
\begin{align} \label{eq:A1}
\begin{split}
  a_{2i-1} &= \frac{k_{2i-1} \log (k_{2i-1} k_{2i-3}) - k_{2i-2} \log k_{2i-1}}{k_{2i-1}-k_{2i-2}} = \frac{\frac{5}{4} k_{2i-2}-1}{k_{2i-2}-1} \log k_{2i-1} \\
  a_{2i} &= \frac{k_{2i} \log k_{2i+1} - k_{2i-1} \log (k_{2i-1} k_{2i-3})}{k_2i-k_{2i-1}} = \frac{4 k_{2i-1}-\frac{5}{4}}{k_{2i-1}-1} \log k_{2i-1} \\
  a_{2i+1} &= \frac{k_{2i+1} \log (k_{2i+1} k_{2i-1}) - k_{2i} \log k_{2i+1}}{k_{2i+1}-k_{2i}} = \frac{5 k_{2i}-4}{k_{2i}-1} \log k_{2i-1}
\end{split}
\end{align}
and $a_{2i-1} \le a_{2i} \le a_{2i+1}$.
This proves (1). 
Let us check (2). Since
\begin{align*}
  \Big(\frac{M_{k+1}}{M_k}\Big)^{1/k} = \frac{\exp(\frac{\vh(k+1)}{k}-\frac{\vh(k)}{k})}{(k+1)^{1/k}} \le \exp(\tfrac{\vh(k+1)}{k}-\tfrac{\vh(k)}{k}), 
\end{align*}
it suffices to show that $\tfrac{\vh(k+1)}{k}-\tfrac{\vh(k)}{k}$ is bounded or equivalently that the slope of the line segments of $\vh$
increases at most linearly in $k$. This is obvious from \eqref{eq:A1}.
Thanks to $k! \le k^k \le e^k k!$ we have 
\[
  \frac{\exp(\frac{\vh(k)}{k})}{k} \le M^{1/k}_k \le \frac{\exp(\frac{\vh(k)}{k}+1)}{k}.
\]
This implies (3). To show (4) let $j$ be even. Then 
\begin{align*}
  \log \frac{M^{1/k_j}_{k_j}}{M^{1/k_{j+1}}_{k_{j+1}}} \ge \log k_j + \log k_{j+1} - \log (k_{j+1} k_{j-1}) -1 = \log k_{j-1}-1 \to \infty,
\end{align*}
as required.
Finally, $k!^s \le M_k \le k!^t$ is equivalent to $1+s \le \tfrac{\vh(k)}{\log k!} \le 1+t$. 
We have   
\[
  \frac{\vh(k_j)}{k_j\log k_j} = 
  \begin{cases}
    2 & \text{ $j$ even} \\
    1+\frac{1}{4} & \text{ $j$ odd}
  \end{cases}
\]
and $\tfrac{\vh(k)}{k\log k} \le \tfrac{\vh(k)}{\log k!} \le 2 \tfrac{\vh(k)}{k\log k}$. 
So the first inequality in (5) follows  
thanks to the fact that $k!^s$ is log-convex for each $s\ge 0$. For the second inequality we observe that in view of \eqref{eq:A1} the slope $a_i$ 
is dominated by the increment $\log ((k+1)!^{1+t}) - \log (k!^{1+t}) = (1+t) \log (k+1)$ for all $k_{i-1}\le k < k_i$ provided that $t>3$ and that $i$
is sufficiently large depending on $t$.

\def\cprime{$'$}
\providecommand{\bysame}{\leavevmode\hbox to3em{\hrulefill}\thinspace}
\providecommand{\MR}{\relax\ifhmode\unskip\space\fi MR }
\providecommand{\MRhref}[2]{%
  \href{http://www.ams.org/mathscinet-getitem?mr=#1}{#2}
}
\providecommand{\href}[2]{#2}

\end{document}